\newcommand{\reals}{\ensuremath{\mathbb{R}}}
\newcommand{\textand}{\enskip \text{and} \enskip}
\def\hat#1{\widehat{#1}}
\def\bar#1{\overline{#1}}
\newcommand{\tsum}{{\textstyle\sum}}
\let\abs\undefined 
\DeclarePairedDelimiter\abs{\lvert}{\rvert}
\let\norm\undefined 
\DeclarePairedDelimiter\norm{\lVert}{\rVert}
\DeclarePairedDelimiter\mnorm{\lvert\kern-0.25ex\lvert\kern-0.25ex\lvert}{\rvert\kern-0.25ex\rvert\kern-0.25ex\rvert}
\DeclarePairedDelimiter\ceil{\lceil}{\rceil}
\DeclarePairedDelimiter\floor{\lfloor}{\rfloor}
\renewcommand{\Pr}{\mathbb{P}}  
\newcommand{\Exp}{\mathbb{E}}	
\begin{document}



\section{Introduction}

Concentration inequalities provide essential tools for theoretical analyses of statistical inference.
Since Sergei Bernstein introduced the Cramér-Cernoff technique in 1927, remarkable development has been made for concentration inequalities. 
Some of the most popular establishments include Bernstein's inequality, Hoeffding's inequality \cite{hoeffding1963probability}, and Bennet's inequality \cite{bennett1962probability}. Those inequalities result from the Cramér-Cernoff technique applied to the moment-generating functions (MGFs) of the random variables.
Those inequalities result in exponential tail bounds for sums of independent random variables, which form a foundation of theoretical analyses in high-dimensional statistics.
The classical forms of the inequalities were established on bounded random variables, but later, their assumptions were relaxed to the sub-Gaussian and sub-exponential conditions (see Sections 2.6-2.8 in \cite{vershynin2018} and Section 2.1 in \cite{wainwright2019high} for thorough reviews.) More recent investigations verified the optimality of the inequalities. For example, both Hoeffding's and Bennett's inequalities cannot be improved for sums of Bernoulli random variables (Lemma 4.7 in \cite{bentkus2004hoeffding}; Example 2.4 in \cite{major2005tail}). Recently, Zhang and Zhou \cite{zhang2020non} showed the general tightness of the MGF-based Cramér-Chernoff bounds. 

The recent development in high-dimensional statistical inference has necessitated concentration inequalities for a broader range of random variables, including those which do not have finite MGF in any neighborhood of $t = 0$. For example, in the context of inference in graphical models (e.g., Chapter 2 of \cite{bong2022discovery} and \cref{sec:application} of this paper), the leading error term is the bias due to the uncertainty in covariance matrix estimation. The uncertainty is quantified as the fourth power of the observed data. 
If the data are Gaussian, the MGF of the fourth power is not finite at $t \neq 0$,
so the MGF-based concentration inequalities are not applicable. In this sense, we say the bias term is heavy-tailed. To bind heavy-tailed random variables, we need concentration inequalities for generalized tail probability characteristics. Here we focus on sub-Weibull random variables, which have exponential tails like sub-Gaussian or sub-Exponential random variables.
A random variable $X$ is said to be {\it sub-Weibull of order $\alpha > 0$} or {\it sub-Weibull$(\alpha)$} with parameters $\nu$ and $L$ if
\begin{equation} \label{eq:sub_weibull_tail}
    \Pr[\abs{X} \geq t] \leq \begin{cases}
        2 \exp\left(-t^2/\nu^2\right), 
        & 0 \leq t < \nu L^{-\alpha/(2-\alpha)}, \\
        2 \exp\left(-t^\alpha/(L\nu)^\alpha\right),
        & t\ge \nu L^{-\alpha/(2-\alpha)}.
    \end{cases}
\end{equation}
A sub-Weibull random variable $X$ has a sub-Gaussian tail for small $t$, where the order $\alpha$ determines the size of tail at large $t$.
For $\alpha = 2$ and $1$, sub-Weibull$(\alpha)$ generalizes sub-Gaussian and sub-exponential, respectively \cite{wellner2017bennett}. 
If $\alpha$ is smaller than $1$, sub-Weibull$(\alpha)$ random variables are heavy-tailed. In particular, the fourth power bias term in our example is sub-Weibull$(1/2)$.

An equivalent way of configuring the sub-Weibull tail behaviors is by Orlicz norms. 
Given a non-decreasing function $g : [0, \infty) \rightarrow [0,\infty)$, the $g$-Orlicz norm of a random variable $X$ is defined by
\begin{equation*}
    \norm{X}_{g} := 
    \inf\{\eta > 0: \Exp[g(\abs{X}/\eta)] \leq 1\}. 
\end{equation*}
The $g$-Orlicz norm representation is equivalent to the tail bound 
\begin{equation*}
    \Pr[\abs{X} \geq t] \leq \frac{2}{g\left(t/\norm{X}_g\right)+1}, ~\forall t \geq 0.
\end{equation*}
Because \cref{eq:sub_weibull_tail} implies $\Pr[\abs{X} \geq t] \leq 2 \exp( - \min\{\frac{t^2}{\nu^2}, \frac{t^\alpha}{(\lambda \nu)^\alpha}\} )$, the tail bound leads to the Generalized Bernstein-Orlicz (GBO) norm of \cite{kuchibhotla2018moving}, the Orlicz norm with respect to $\phi_{\alpha,L}(x) := \exp(\min\{x^2, (x/L)^\alpha\}) - 1$. 
We note that the parameter $\nu$ corresponds to the size of $\norm{X}_{\phi_{\alpha,L}}$. We denote the collection of mean zero random variables with $\norm{X}_{\phi_{\alpha,L}} \leq \nu$ by $\Phi_{\alpha,L}(\nu)$.

In \cite{kuchibhotla2018moving}, Kuchibhotla and Chakrabortty provided a generalized version of Bernstein's inequality for sub-Weibull random variables and its application to high-dimensional statistical theory when sub-Weibull$(\alpha)$ was defined with $\psi_\alpha$-Orlicz norm, where $\psi_\alpha(x) = \exp(x^\alpha) - 1$. We note that random variable $X$ with finite $\psi_\alpha$-Orlicz norm has an unmixed tail bound, $\Pr[|X| \geq t] \leq 2\exp(-{t^\alpha}/{\norm{X}_{\psi_\alpha}^\alpha})$ so that $X$ is sub-Weibull$(\alpha)$ (i.e., satisfies \cref{eq:sub_weibull_tail}) with parameters $\nu = \norm{X}_{\psi_\alpha}/L$ and $L$ for any $L > 0$. To exemplify the advantages offered by the granularity of the GBO norm, consider the tail bound implied by Theorem 3.1 of \cite{kuchibhotla2018moving}: If $X_1, \dots, X_n$ are independent mean zero random variables with $\norm{X_i}_{\psi_\alpha} < \infty$, then for any vector $a = (a_1, \dots, a_n) \in \reals^n$,
\begin{equation*}
    \Pr\left[\abs*{\sum_{i=1}^n a_i X_i} \geq C(\alpha) (\norm{b}_2 \sqrt{t} + \norm{b}_{\beta(\alpha)} t^{1/\alpha})\right] \leq 2e^{-t}, 
\end{equation*}
where $b = (a_1 \norm{X_1}_{\psi_\alpha}, \dots, a_n \norm{X_n}_{\psi_\alpha})$, $\beta(\alpha) = \infty$ when $\alpha \leq 1$, $\beta(\alpha) = \alpha/(\alpha-1)$ when $\alpha > 1$, and $C(\alpha)$ is a constant dependent on $\alpha$.
The tail bound is a mixture of sub-Gaussian (when $t$ is small) and sub-Weibull$(\alpha)$ (when $t$ is large) tails and is better represented in \cite{kuchibhotla2018moving} by the GBO norm, rather than the $\psi_\alpha$ norm. If $\psi_\alpha$-Orlicz norm were used, the tightness of the tail at large $t$ should be compromised by upper bounding the sub-Gaussian tail at small $t$. This is due to a better granularity of the sub-Weibull space represented by the GBO norm than by $\psi_\alpha$-Orlicz norm. To have a finite $\psi_\alpha$-Orlicz norm, random variable $X$ should satisfy \cref{eq:sub_weibull_tail} with every value of $L > 0$, with $\nu = \|X\|_{\psi_{\alpha}}/L$. In contrary, the GBO norm has one more degree of freedom with parameter to further granulate the space of sub-Weibull$(\alpha)$ random variables. 

In this paper, we extend the results of \cite{kuchibhotla2018moving} to sub-Weibull$(\alpha,L)$ random variables with finite Generalized Bernstein-Orlicz (GBO) norms. The added granularity of the specification for the summands $X_1, \dots, X_n$ benefits the tightness of theoretical analyses in many statistical applications; we discuss an example in \cref{sec:application}.
We consider concentration inequalities in moments, the GBO norms, and tail probability bounds for weighted sums of independent $\text{sub-Weibull}(\alpha,L)$ random variables. To bound the moments, we follow the {\it sequence Orlicz norm} argument devised by Lata{\l}a in \cite{latala1997estimation}, which we provide as preliminary theoretical results in \cref{sec:pre_lemmas}. The resulting moment bound follows the form of Rosenthal's inequality (see \cref{thm:moment_bound}). From the moment bounds, we induce the upper bounds for the GBO norms and tail probabilities, using the arguments in \cite{kuchibhotla2018moving}. The resulting tail probability bounds provide generalized Bernstein's inequalities for $\text{sub-Weibull}(\alpha,L)$ random variables. We also show the tightness of the proposed bounds by obtaining the lower bounds of the concentration inequalities in the worst case. The lower bound proof follows the Paley-Zygmund inequality technique in the corollary of \cite{gluskin1995tail} and Theorem 2.1 of \cite{hitczenko1999note}.

Our theoretical contribution focuses on the case of $\alpha \leq 1$. Tail probabilities of $\text{sub-Weibull}(\alpha,L)$ random variables with order $\alpha > 1$ are bounded by a mixture of logarithmically concave (log-concave) tails. 
For this case, the tight moment and tail probability bounds are readily available by \cite{gluskin1995tail}, 
which we also include in this paper for completeness. We refer to Theorem 1 of \cite{hitczenko1997moment} for log-convex tail cases and Examples 3.2 and 3.3 in \cite{latala1997estimation} for the sequence Orlicz norm technique establishing the tight moment bounds in the two references.

\section{Main Results} \label{sec:main_results}
Let $X_i$ be independent mean zero sub-Weibull$(\alpha,L_i)$ random variables for $i=1, \dots, n$.
Our objective is to bound the size of a weighted sum $X^* := \sum_{i=1}^n a_i X_i$ for $a = (a_1, \dots, a_n) \in \reals^n$. 
Because $\norm{X_i/c}_{\phi_{\alpha,L_i}} = \norm{X_i}_{\phi_{\alpha,L_i}}/c$ for $c > 0$, by adjusting the value of $a_i$,
we assume $\norm{X_i}_{\phi_{\alpha,L_i}} \leq 1$ and $a_i \geq 0$. We denote the collection of such random variables by $\Phi_{\alpha,L_i}(1)$. More generally, for $\eta > 0$,
\begin{equation*}
    \Phi_{\alpha, L}(\eta) := \{ X: \Exp[X] = 0, \norm{X}_{\phi_{\alpha,L}} \leq \eta \}.
\end{equation*}
For $L = (L_1, \dots, L_n)$, we denote the collection $(X_1, \dots, X_n)$ of random variables satisfying the above conditions by 
\begin{equation}
    \label{eq:collection-sub-Weibull-notation}
    \mathcal{X}_{\alpha, L} = \{(X_1, \dots, X_n): \Exp[X_i] = 0, ~ X_i \in \Phi_{\alpha, L_i}(1), ~\text{and}~ X_1, \ldots, X_n\text{ are independent}\}.
\end{equation}
Also, because the order of the random variables does not change the sum, we will assume $(a_i \bar{L}_i)_{i=1}^n$ is non-decreasing, where $\bar{L} = (\max\{1, L_i\})_{i=1}^{n}$. That is, $a_1 \bar{L}_1 \geq \dots \geq a_n \bar{L}_n \geq 0$.

Among the collection $\mathcal{X}_{\alpha,L}$, there are random variables that take a special place; these are random variables that attain the worst case upper bounds. Let $Z_1, Z_2, \ldots, Z_n$ be independent random variables satisfying 
\begin{equation} \label{eq:Z_sub_weibull_tail}
    \Pr[\abs{Z_i} \geq t] = \exp( - \min\{t^2, (t/L_i)^\alpha\} ), ~ \forall t \geq 0, ~ i=1, \dots, n,
\end{equation}
and $Z^* := \sum_{i=1}^n a_i Z_i$. We note that $\min\{\sqrt{2}, 2^{1/\alpha}\} \leq \norm{Z_i}_{\phi_{\alpha, L_i}} \leq \max\{\sqrt{3}, 3^{1/\alpha}\}$; see \cref{sec:GBO_Z_i} for the proof. Also, the proofs for the main results are provided in \cref{sec:pf_theorems}.

\vspace{0.1in}
{\bf Notation.} 
The binary operator $\odot$ between two vectors indicates the element-wise multiplication, i.e., for two vectors $a, b \in \reals^n$, $a \odot b = (a_i b_i)_{i=1}^n$. 
In the following results, $C(\dots)$ is a constant with implicit dependency to the parameters in the parentheses, whose value changes across lines. We omit the parentheses for absolute constants with no dependency on problem parameters and denote them by $C$.

\subsection{Optimal Moment Bound} \label{sec:moment_bound}

First, we provide a tight upper bound for the moments of $X^*$. Following the proof of Theorem 3.1 in \cite{kuchibhotla2018moving}, we show that $\norm{X^*}_p$ is upper bounded by $C(\alpha) \norm{Z^*}_p$. On the other hand, $Z_i/\max\{\sqrt{3}, 3^{1/\alpha}\} \in \Phi_{\alpha,L_i}(1)$, so $\norm{Z^*/\max\{\sqrt{3}, 3^{1/\alpha}\}}_p$ lower bounds $\sup_{X_i \in \Phi_{\alpha,L_i}(1)} \norm{X^*}_p$. Therefore, the tight moment bounds for $Z^*$ are sufficient for deriving those for $X^*$. For $\alpha \leq 1$, the tail probability $\Pr[\abs{Z_i} \geq t]$ is log-concave for small $t$ and logarithmically convex (log-convex) for large $t$, which are addressed in Examples 3.2 and 3.3 of \cite{latala1997estimation}, respectively. For $\alpha \geq 1$, the tail probability is bounded by two log-concave functions. The tight moment concentration inequality for log-concave-tailed random variables is available in \cite{gluskin1995tail}. We use a slightly generalized result in Example 3.2 of \cite{latala1997estimation}, which didn't require i.i.d. assumption made in \cite{gluskin1995tail}. The resulting moment bound is given as follows.

\begin{theorem} \label{thm:moment_bound}
    Fix $\alpha\ge 0$ and non-negative real numbers $L_1, \ldots, L_n$.
    Then for $p \geq 1$, 
    \begin{equation} \label{eq:moment_upper}
        \sup_{(X_1,\dots,X_n) \in \mathcal{X}_{\alpha,L}} \norm{X^*}_p
        ~\leq~ C(\alpha) \max\left\{\sqrt{p} \norm{a \odot \bar{L}}_2,\;
        p^{1/\alpha} \norm{(a_i L_i: i\leq p)}_\beta \right\},
    \end{equation}
    where $\beta = \frac{\alpha}{\alpha-1}$ for $\alpha > 1$ and $\infty$ for $\alpha \leq 1$.
    Furthermore, the upper bound is tight in terms of that for $p \geq 1$,
    \begin{equation} \label{eq:moment_lower}
        \sup_{(X_1,\dots,X_n) \in \mathcal{X}_{\alpha,L}} \norm{X^*}_p
        ~\geq~ \frac{1}{C(\alpha)} \max\left\{\sqrt{p} \norm{a \odot \bar{L}}_2,\;
        p^{1/\alpha} \norm{(a_i L_i: i\leq p)}_\beta \right\}.
    \end{equation}
\end{theorem}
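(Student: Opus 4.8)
The plan is to reduce both inequalities to one computation---the $p$-th moment of the \emph{canonical} sum $Z^*=\sum_{i=1}^n a_iZ_i$---and then to estimate $\norm{Z^*}_p$ by Lata\l{}a's sequence Orlicz norm method. For \eqref{eq:moment_lower} the reduction is immediate: $Z_i$ is symmetric (hence mean zero) and $\norm{Z_i}_{\phi_{\alpha,L_i}}\le M:=\max\{\sqrt3,3^{1/\alpha}\}$ by \cref{sec:GBO_Z_i}, so $(Z_1/M,\dots,Z_n/M)\in\mathcal{X}_{\alpha,L}$ and $\sup_{(X_i)\in\mathcal{X}_{\alpha,L}}\norm{X^*}_p\ge\norm{Z^*}_p/M$. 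For \eqref{eq:moment_upper} I would follow the proof of Theorem~3.1 of \cite{kuchibhotla2018moving}: the sub-Weibull tail bound defining $\Phi_{\alpha,L_i}(1)$ gives $\Pr[\abs{X_i}\ge t]\le2\Pr[\abs{Z_i}\ge t]$ and hence $\norm{X_i}_q\le2\norm{Z_i}_q$ for every $q\ge1$ and every $X_i\in\Phi_{\alpha,L_i}(1)$, while Lata\l{}a's estimate for $\norm{\sum_ia_iY_i}_p$ ($Y_i$ independent mean zero) depends on the $Y_i$ only through their marginal moments and, after symmetrizing, is monotone in them; hence $\sup_{(X_i)}\norm{X^*}_p\le C(\alpha)\norm{Z^*}_p$. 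It remains to show $\norm{Z^*}_p\asymp_\alpha\max\{\sqrt p\,\norm{a\odot\bar{L}}_2,\;p^{1/\alpha}\norm{(a_iL_i:i\le p)}_\beta\}$, where $(a_iL_i:i\le p)$ denotes the $\min\{p,n\}$ largest entries of $(a_1L_1,\dots,a_nL_n)$.

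For the core estimate in the main case $\alpha\le1$, I would use that $\Pr[\abs{Z_i}\ge t]$ is log-concave of Gaussian type on $[0,L_i^{-\alpha/(2-\alpha)}]$ and log-convex of $\psi_\alpha$ type beyond that crossover. Decompose $Z_i=Z_i'+Z_i''$ there, with $Z_i'$ the centered truncation; apply the non-i.i.d.\ log-concave case of Lata\l{}a's two-sided bound (Example~3.2 of \cite{latala1997estimation}) to $\sum_i a_iZ_i'$ and its log-convex case (Example~3.3) to $\sum_i a_iZ_i''$, and collect the two types of contributions that appear. The Gaussian-bulk parts of the two bounds, of orders $\sqrt p\,\norm{a}_2$ and $\sqrt p\,(\sum_{i:L_i\ge1}a_i^2L_i^2)^{1/2}$, add up to order $\sqrt p\,\norm{a\odot\bar{L}}_2$ (using $\bar{L}_i^2\le1+L_i^2$ and $\Var Z_i\asymp_\alpha\bar{L}_i^2$); the single-large-summand part of the log-convex bound has order $p^{1/\alpha}\norm{(a_iL_i:i\le p)}_\beta$ (here $\beta=\infty$, i.e.\ $p^{1/\alpha}\max_{i\le p}a_iL_i$, the ordering $a_1\bar{L}_1\ge\cdots\ge a_n\bar{L}_n$ singling out the relevant indices). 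Since $\norm{Z^*}_p$ is at most the sum of these and the sum is comparable to the maximum, \eqref{eq:moment_upper} follows. For $\alpha>1$ both pieces are log-concave, and the bound with $\beta=\alpha/(\alpha-1)$ is the Gluskin--Kwapie\'n estimate \cite{gluskin1995tail}, which I include only for completeness.

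For the matching lower bound it suffices to bound $\norm{Z^*}_p$ below by a constant (depending on $\alpha$) times each of the two terms. For the heavy-tail term: by conditioning and Jensen, $\norm{Z^*}_p\ge\norm{a_jZ_j}_p=a_j\norm{Z_j}_p$ for every $j$, and since $\abs{Z_j}$ stochastically dominates a Weibull$(\alpha)$ variable of scale $L_j$ we get $\norm{Z_j}_p\ge p^{1/\alpha}L_j/C(\alpha)$ for $\alpha\le1$; hence $\norm{Z^*}_p\ge p^{1/\alpha}\max_ja_jL_j/C(\alpha)$, which already controls $p^{1/\alpha}\max_{i\le p}a_iL_i$, while the full $\ell_\beta$ version for $\alpha>1$ is supplied by \cite{gluskin1995tail} (whose proof is the Paley--Zygmund argument also used in \cite{hitczenko1999note}). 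For the sub-Gaussian term: $\norm{Z^*}_p\ge\norm{Z^*}_2=(\sum_ia_i^2\Var Z_i)^{1/2}\asymp_\alpha\norm{a\odot\bar{L}}_2$, and the extra $\sqrt p$ (needed only when this term is the maximum) comes from the lower half of Lata\l{}a's two-sided estimate applied directly to $Z^*$: the inequality $\norm{1+a_iZ_i/t}_p^p\ge1+\binom{p}{2}a_i^2\Var Z_i/t^2$ for symmetric $a_iZ_i$ and $p\ge2$ forces the defining level $t$ of the sequence Orlicz norm to be at least of order $\sqrt p(\sum_ia_i^2\Var Z_i)^{1/2}$ (the range $1\le p<2$ being dominated by $\norm{Z^*}_2$). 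Taking the larger of the two lower bounds gives \eqref{eq:moment_lower}.

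The step I expect to be the main obstacle is the core estimate for $\alpha\le1$: adapting Lata\l{}a's i.i.d.\ Examples~3.2 and~3.3 to the non-identically-distributed, two-regime setting, keeping the Gaussian scale $\bar{L}_i$ and the Weibull scale $L_i$ correctly separated across the crossover, and checking that the log-convex piece produces exactly the $\ell_\beta$ norm of the $p$ largest weights (which is where the ordering assumption on $(a_i\bar{L}_i)_i$ enters). The other points needing care are making the reduction $\norm{X^*}_p\le C(\alpha)\norm{Z^*}_p$ precise at the level of the sums rather than the marginals (via a symmetrization step), and supplying the $\sqrt p$ enhancement in the sub-Gaussian lower bound.
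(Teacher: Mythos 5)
Your architecture is the paper's: reduce to the canonical sum $Z^*$, estimate $\norm{Z^*}_p$ via Lata{\l}a's sequence Orlicz norm (Examples 3.2/3.3) for $\alpha\le 1$ and via a non-i.i.d.\ Gluskin--Kwapie\'n bound for $\alpha>1$, and get \cref{eq:moment_lower} from $Z_i/\max\{\sqrt3,3^{1/\alpha}\}\in\Phi_{\alpha,L_i}(1)$. However, two steps would fail as written. First, the reduction $\sup_X\norm{X^*}_p\le C(\alpha)\norm{Z^*}_p$: from $\Pr[\abs{X_i}\ge t]\le 2\Pr[\abs{Z_i}\ge t]$ you cannot conclude that (the symmetrization of) $X_i$ is stochastically dominated by $C\abs{Z_i}$ --- the factor $2$ is fatal near $t=0$, where $\Pr[\abs{X_i}\ge t]$ may equal $1$ while $\Pr[C\abs{Z_i}\ge t]<1$ --- and the functional $\phi_p(\cdot)$ entering $\mnorm{(\cdot)}_p$ is monotone under stochastic domination of symmetric variables, not under ``all marginal moments inflated by a factor $2$.'' The missing device is the one the paper uses (see \cref{eq:prob_bound_by_Z}): symmetrize and peel off a deterministic shift $\mu_i\le C(\alpha)\bar{L}_i$, so that $(\abs{X_i}-\mu_i)_+$ is tail-dominated by (a constant multiple of) $\abs{Z_i}$, at the price of an extra additive term $C(\alpha)\sqrt{p}\,\norm{a\odot\bar{L}}_2$ which is then absorbed into the stated bound; flagging ``a symmetrization step'' does not by itself supply this. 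Second, in the core estimate for $\alpha\le1$, the excess piece $Z_i''$ of your additive decomposition does not have a log-convex tail: its survival function is constant at height $e^{-(\tau_i/L_i)^\alpha}$ up to the crossover $\tau_i$ and only then decays, creating a concave kink, so Example 3.3 does not apply to it verbatim. You must first dominate $\abs{Z_i''}$ stochastically by $L_i\abs{Y_i}^{2/\alpha}$, at which point you have essentially rediscovered the paper's route, which skips the truncation entirely via the identity $\abs{Z_i}\overset{d}{=}\max\{\abs{Y_i},L_i\abs{Y_i}^{2/\alpha}\}$ (\cref{thm:Z_by_Y}) and sandwiches $\log\phi_p(Z_i/\eta)$ between the maximum of the two individual $\log\phi_p$'s and twice it (\cref{thm:log_phi_bound}), yielding both directions of the sequence-norm computation at once.

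On the lower bound, your single-summand Jensen argument for the $p^{1/\alpha}\max_{i\le p}a_iL_i$ term is correct and simpler than the paper's (which reads this term off the two-sided sequence-norm estimate), but two details need repair. The inequality $\varphi_p(x)\ge 1+\binom{p}{2}x^2$ is false for non-integer $2<p<4$ at small $x$ (the $x^4$ Taylor coefficient is negative); you need a constant-weakened substitute, e.g.\ Lata{\l}a's own bound $\log\phi_p(Y_i/\eta)\ge p\min\{1,p/(e^6\eta^2)\}$ used in the paper, which still forces $\mnorm{(a_iZ_i)}_p\gtrsim\sqrt{p}\,\norm{a\odot\bar{L}}_2$. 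And for $1\le p<2$ the required bound is $\norm{Z^*}_p\ge c(\alpha)\norm{Z^*}_2$, which is not automatic --- ``dominated by $\norm{Z^*}_2$'' points the wrong way --- and is proved in the paper by a Paley--Zygmund comparison of $\norm{Z^*}_2$ with $\norm{Z^*}_4$ followed by Markov's inequality. With these repairs your argument coincides, up to reorganization, with the paper's proof.
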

The bound is in the form of the Rosenthal inequalities. We focus on the dependency of the constants on order $p$ of the moments, while the dependency on $\alpha$ is given implicitly. The appearance of $\bar{L}$ is anticipated because sub-Weibull$(\alpha,L)$ random variables have a mixture of sub-Gaussian and sub-Weibull$(\alpha)$ tail probability bounds.
The tail behavior of $X^*$ follows both sub-Gaussian and sub-Weibull$(\alpha)$ concentration inequalities. Based on \cite{kuchibhotla2018moving}, the sub-Gaussian concentration inequality induces a $p$-moment upper bound $C(\alpha) \sqrt{p} \norm{a}_2$, whereas the sub-Weibull$(\alpha)$ version yields $C(\alpha) \max\{ \sqrt{p} \norm{a \odot L}_2, p^{1/\alpha} \norm{(a_i L_i: i \leq p)}_\beta \}$. Taking their maximum, we have $\bar{L}$ term inside the $\norm{\cdot}_2$-norm. This term is confirmed to be essential by the lower bound in \cref{eq:moment_lower},
which demonstrates the tightness of the upper bound. We note that the supremum is attained (up to a constant) when $X_i = Z_i/\max\{\sqrt{3}, 3^{1/\alpha}\}$. 

\begin{remark}
Suppose that $X_i$ are sub-Weibull$(\alpha)$ random variables with $\norm{X_i}_{\psi_\alpha} \leq 1$ and that $\abs{a_1} \geq \dots \geq \abs{a_n}$. By the definitions of the $\psi_\alpha$- and $\phi_{\alpha,L}$-Orlicz norms, 
\begin{equation*}
    \norm{X_i}_{\psi_\alpha} \leq 1 \Rightarrow \norm{L_i X_i}_{\phi_{\alpha,L_i}} \leq 1,
\end{equation*}
for any $L_i > 0$. Applying \cref{eq:moment_upper} to $X^* = \sum_{i=1}^n a_i X_i = \sum_{i=1}^n \frac{a_i}{L_i} (L_i X_i)$, we obtain 
\begin{equation} \label{eq:incomplete_moment_upper_psi}
    \norm{X^*}_p \leq C(\alpha) \max\left\{ \sqrt{p} \norm{(a_i \max\{1, 1/L_i\}: i \in [n])}, p^{1/\alpha} \norm{(a_i: i \leq p)}_\beta \right\}. 
\end{equation}
Because \cref{eq:incomplete_moment_upper_psi} holds for every $L_i > 0$, we have a concentration inequality for sub-Weibull$(\alpha)$ random variables with finite $\psi_\alpha$ norms.
\begin{equation} \label{eq:moment_upper_psi}
    \norm{X^*}_p \leq C(\alpha) \max\left\{ \sqrt{p} \norm{a}_2, p^{1/\alpha} \norm{(a_i: i \leq p)}_\beta \right\}. 
\end{equation}
This upper bound is better than those in the proof of Theorem 3.1 in \cite{kuchibhotla2018moving}. 
In \cite{kuchibhotla2018moving}, the moment bound was written only in terms of $\|a\|_{\beta}$ and did not involve $\norm{(a_i: i\leq p)}_\beta$.
Because
\[
\|(a_i:\,1\le i\le p)\|_{\beta} = \begin{cases}
\|(a_i:\, 1\le i\le p)\|_{\infty} = \|a\|_{\infty} = \|a\|_{\beta}, &\mbox{if }\alpha \le 1,\\
\|(a_i:\, 1\le i\le p)\|_{\beta} \le \|a\|_{\beta}, &\mbox{if }\alpha > 1,
\end{cases}
\]
we get that the moment upper bound in \cite{kuchibhotla2018moving} was suboptimal for $\alpha > 1$.
\end{remark}

\subsection{Optimal GBO Norm Bound} \label{sec:GBO_bound}

Next, we induce GBO norm bounds for $X^*$ from the moment bound results in \cref{sec:moment_bound}. We use the equivalence between the moment growth and the GBO norm, provided in Proposition A.4 of \cite{kuchibhotla2018moving}.
The resulting GBO norm bound is given as follows.

\begin{theorem} \label{thm:GBO_bound}
    There exists a constant $C(\alpha)$ depending only on $\alpha$ such that with $L^* := C(\alpha) \norm{a \odot L}_\beta/\norm{a \odot \bar{L}}_2$, 
    \begin{equation} \label{eq:GBO_upper}
        \sup_{(X_1,\dots,X_n) \in \mathcal{X}_{\alpha,L}} \norm{X^*}_{\phi_{\alpha, L^*}} 
        ~\leq~ C(\alpha) \norm{a \odot \bar{L}}_2,
    \end{equation}
    where $\beta = \frac{\alpha}{\alpha-1}$ for $\alpha > 1$ and $\infty$ for $\alpha \leq 1$.
    Furthermore, the upper bound is tight in terms of that
    \begin{equation} \label{eq:GBO_lower}
        \sup_{(X_1,\dots,X_n) \in \mathcal{X}_{\alpha,L}}  \norm{X^*}_{\phi_{\alpha, L^*}} 
        ~\geq~ \frac{1}{C(\alpha)} \norm{a \odot \bar{L}}_2.
    \end{equation}
\end{theorem}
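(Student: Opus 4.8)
The plan is to derive both inequalities from the moment bounds of \cref{thm:moment_bound} by feeding them through the equivalence between GBO norms and moment growth, which is Proposition~A.4 of \cite{kuchibhotla2018moving}: up to constants depending only on $\alpha$---and, crucially, \emph{not} on the GBO parameter $M$---one has $\norm{X}_{\phi_{\alpha,M}}\asymp\sup_{p\ge1}\norm{X}_p/\max\{\sqrt p,\,Mp^{1/\alpha}\}$. I will write $A:=\norm{a\odot\bar{L}}_2$ (assumed positive, else the statement is trivial) and $B_p:=\norm{(a_iL_i:i\le p)}_\beta$, so that $B_n=\norm{a\odot L}_\beta$, the prescribed value is $L^*=C(\alpha)B_n/A$, and $B_p\le B_n$ for every $p\le n$ (an $\ell_\beta$-norm only grows when coordinates are appended, whether $\beta$ is finite or $\infty$).

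For the upper bound I would fix $(X_1,\dots,X_n)\in\mathcal{X}_{\alpha,L}$, divide \cref{eq:moment_upper} by the normalizer $\max\{\sqrt p,\,L^*p^{1/\alpha}\}$, and bound each term of the numerator by the corresponding term of the denominator:
\[
\frac{\norm{X^*}_p}{\max\{\sqrt p,\,L^*p^{1/\alpha}\}}\ \le\ C(\alpha)\max\Bigl\{A,\ \tfrac{B_p}{L^*}\Bigr\}\ \le\ C(\alpha)A,
\]
the last step using $B_p\le B_n=L^*A/C(\alpha)$. Taking $\sup_{p\ge1}$ and quoting Proposition~A.4 then gives \cref{eq:GBO_upper}. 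The point is that $L^*$ has been chosen precisely so that the sub-Gaussian part of the moment bound is absorbed by the $\sqrt p$ term of the normalizer and the sub-Weibull part by the $L^*p^{1/\alpha}$ term.

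For the lower bound I would take the extremal family $X_i=Z_i/c_\alpha$, $c_\alpha:=\max\{\sqrt3,3^{1/\alpha}\}$ (symmetrized so that $\Exp X_i=0$), which lies in $\mathcal{X}_{\alpha,L}$ because $\norm{Z_i}_{\phi_{\alpha,L_i}}\le c_\alpha$; by \cref{eq:moment_lower} and the remark that the supremum there is attained up to a constant at these variables, $\norm{X^*}_p\ge\frac1{C(\alpha)}\max\{\sqrt p\,A,\ p^{1/\alpha}B_p\}$ for all $p\ge1$. Inserting this into Proposition~A.4 and retaining only the term $p=n$ in the supremum, I would split into the two cases $\sqrt n\ge L^*n^{1/\alpha}$ and $\sqrt n<L^*n^{1/\alpha}$: in the first the normalizer equals $\sqrt n$ and the resulting bound is $\ge A$; in the second it equals $L^*n^{1/\alpha}$ and, since $L^*=C(\alpha)B_n/A$, the bound is $\ge B_n/L^*=A/C(\alpha)$. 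Either way one recovers $\norm{a\odot\bar{L}}_2/C(\alpha)$, which is \cref{eq:GBO_lower}.

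The step I expect to need the most care is the use of Proposition~A.4: I must check that its moment-Orlicz equivalence carries constants uniform in the GBO parameter (so that substituting the instance-dependent $L^*$ costs nothing) and that the Orlicz normalizer $\Psi_{\alpha,M}^{-1}(p)$ really is comparable to $\max\{\sqrt p,\,Mp^{1/\alpha}\}$ uniformly in $M$ over $p\ge1$---both are available in \cite{kuchibhotla2018moving} but must be cited in the right form. Everything else is the elementary two-regime arithmetic sketched above, together with the bookkeeping that $L^*$ is exactly the parameter at which the two regimes change over.
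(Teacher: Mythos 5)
Your proposal is correct and follows essentially the same route as the paper: both directions are obtained by feeding the moment bounds of \cref{thm:moment_bound} through the moment--GBO equivalence of \cite{kuchibhotla2018moving} (the paper uses its Proposition C.1 for the upper bound and Proposition A.4 for the lower bound, while you use Proposition A.4 for both, which is an immaterial difference), and your lower-bound argument with the extremal $Z_i/c_\alpha$, the choice $p=n$, and the two-regime comparison of the normalizer is exactly the paper's.
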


The supremum in~\cref{eq:GBO_lower} is attained (up to constants) when $X_i = Z_i/\max\{\sqrt{3}, 3^{1/\alpha}\} \in \Phi_{\alpha,L_i}(1)$.

\subsection{Optimal Tail Probability Bound} \label{sec:tail_bound}

Last, we induce the tail probability bounds for $X^*$ from the moment bound in \cref{sec:moment_bound}. The upper bound follows from Chebyshev's inequality, and the lower bound  from the Paley-Zygmund inequality. The proof follows Corollary of \cite{gluskin1995tail} and Theorem 2.1 of \cite{hitczenko1999note}.

\begin{theorem} \label{thm:tail_bound}
    Let $K(t) = \sup\{p: \sqrt{p} \norm{a \odot \bar{L}}_2 + p^{1/\alpha} \norm{(a_i L_i: i\leq p)}_\beta \leq t\}$, where $\beta = \frac{\alpha}{\alpha-1}$ for $\alpha > 1$ and $\infty$ for $\alpha \leq 1$.
    Then, for all $\alpha > 0$, there exists a constant $C(\alpha)$ such that for all $t \geq \norm{a \odot \bar{L}}_2 + \norm{a \odot L}_\infty$, 
    \begin{equation} \label{eq:tail_prob_upper_logconcave}
        \sup_{(X_1,\dots,X_n) \in \mathcal{X}_{\alpha,L}} \Pr\left[\abs*{X^*} \geq t\right] 
        ~\leq~ \exp\left(- \frac{1}{C(\alpha)} K(t)\right).
    \end{equation}
    Furthermore, the upper bound is tight in terms of that for $t \geq \norm{a \odot \bar{L}}_2 + \norm{a \odot L}_\infty$,
    \begin{equation} \label{eq:tail_prob_lower_logconcave}
        \sup_{(X_1,\dots,X_n) \in \mathcal{X}_{\alpha,L}} \Pr\left[\abs*{X^*} \geq t\right] 
        ~\geq~ \exp\left(- C(\alpha) K(t)\right).
    \end{equation}
    Alternatively, for $t \geq 0$,
    \begin{equation} \label{eq:tail_prob_upper}
    \begin{split}
        &\sup_{(X_1,\dots,X_n) \in \mathcal{X}_{\alpha,L}} \Pr\left[\abs*{X^*} \geq t\right] 
        ~\leq~ 2 \exp\left(- \frac{1}{C(\alpha)} \min\left\{
        \frac{t^2}{\norm{a \odot \bar{L}}_2^2}, 
        \frac{t^{\alpha}}{\norm{a \odot L}_\beta^\alpha}\right\}\right),
    \end{split}
    \end{equation}
    and the upper bound is tight if $\alpha \leq 1$: for $t \geq 0$,
    \begin{equation} \label{eq:tail_prob_lower}
    \begin{split}
        &\sup_{(X_1,\dots,X_n) \in \mathcal{X}_{\alpha,L}} \Pr\left[\abs*{X^*} \geq t\right] 
        ~\geq~ \frac{1}{C(\alpha)} \exp\left(- C(\alpha) \min\left\{
        \frac{t^2}{\norm{a \odot \bar{L}}_2^2}, 
        \frac{t^{\alpha}}{\norm{a \odot L}_\beta^\alpha}\right\}\right).
    \end{split}
    \end{equation}
\end{theorem}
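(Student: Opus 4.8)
The strategy is to convert the moment estimates of \cref{thm:moment_bound} into tail estimates: Markov's inequality for the upper bounds, and the Paley--Zygmund inequality for the lower bounds, in the spirit of the corollary of \cite{gluskin1995tail} and Theorem~2.1 of \cite{hitczenko1999note}. Throughout, abbreviate $A := \norm{a\odot\bar L}_2$, $B_p := \norm{(a_iL_i:i\le p)}_\beta$, $h(p) := \sqrt{p}\,A + p^{1/\alpha}B_p$ and $g(p) := \max\{\sqrt{p}\,A,\ p^{1/\alpha}B_p\}$, so that $g\le h\le 2g$, both functions are nondecreasing, $h(1)=\norm{a\odot\bar L}_2+\norm{a\odot L}_\infty$, and $K(t)=\sup\{p\ge 1:h(p)\le t\}$.

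\textbf{Upper bounds.} Fix $(X_1,\dots,X_n)\in\mathcal X_{\alpha,L}$ and an integer $p\ge 1$. Markov's inequality gives $\Pr[\abs{X^*}\ge e\norm{X^*}_p]\le e^{-p}$, while \eqref{eq:moment_upper} gives $\norm{X^*}_p\le C(\alpha)h(p)$. Given $t$, take $p=\lfloor K(t/(eC(\alpha)))\rfloor$; then $eC(\alpha)h(p)\le t$, so $\Pr[\abs{X^*}\ge t]\le e^{-p}$ uniformly over the class. To compare $p$ with $K(t)$, the explicit form of $h$ yields the crude scaling $h(\lambda^2 p)\le\lambda\,h(p)$ for $0<\lambda\le 1$ (using $\lambda^{2/\alpha}\le\lambda$, valid since $\alpha\le 2$, and $B_{\lambda^2 p}\le B_p$), hence $K(\mu t)\ge\mu^2 K(t)$ for $0<\mu\le1$; this gives $p\ge K(t)/C(\alpha)$ once $t$ exceeds a fixed $\alpha$-multiple of $h(1)$, proving \eqref{eq:tail_prob_upper_logconcave}, and the remaining bounded range of $t$ (on which $h(p)\ge\tfrac12\sqrt{p}\,h(1)$ forces $K(t)\le 4$) is absorbed into $C(\alpha)$. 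For \eqref{eq:tail_prob_upper}, bound $B_p\le\norm{a\odot L}_\beta$ and take $p=\min\{(t/2A)^2,(t/2\norm{a\odot L}_\beta)^\alpha\}$ to get $K(t)\ge\frac1{C(\alpha)}\min\{t^2/A^2,\ t^\alpha/\norm{a\odot L}_\beta^\alpha\}$; together with \eqref{eq:tail_prob_upper_logconcave} and the trivial bound $\Pr\le 1$ for small $t$ (absorbed by the factor $2$) this gives \eqref{eq:tail_prob_upper} for all $t\ge 0$.

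\textbf{Lower bounds.} Evaluate the supremum at $X_i=Z_i/\max\{\sqrt3,3^{1/\alpha}\}\in\Phi_{\alpha,L_i}(1)$; by \eqref{eq:moment_upper}--\eqref{eq:moment_lower} and the fact that the extremal configuration is $Z_i/\max\{\sqrt3,3^{1/\alpha}\}$, the corresponding sum satisfies $\tfrac1{C(\alpha)}g(p)\le\norm{Z^*}_p\le C(\alpha)g(p)$ for all $p\ge1$. The crucial ingredient is a doubling estimate for $g$: $g(2p)\le C(\alpha)g(p)$. This is immediate from the form of $g$, since $\sqrt{2p}\,A=\sqrt2\,\sqrt{p}\,A$ and, for nonnegative reals, the $\ell_\beta$-norm of the $2p$ largest is at most $2^{1/\beta}$ times that of the $p$ largest (the terms ranked $p{+}1$ through $2p$ are each dominated by the $p$-th largest, hence by the average of the top $p$), so $p^{1/\alpha}B_p$ multiplies by at most $2^{1/\alpha+1/\beta}$, a finite $\alpha$-constant. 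Consequently $\norm{Z^*}_{2p}\le C(\alpha)\norm{Z^*}_p$ for all $p\ge1$, and the Paley--Zygmund inequality applied to $\abs{Z^*}^p$ with threshold $\tfrac12$ gives
\begin{equation*}
\Pr\!\left[\abs{Z^*}\ge\tfrac12\norm{Z^*}_p\right]\ \ge\ \tfrac14\Bigl(\norm{Z^*}_p/\norm{Z^*}_{2p}\Bigr)^{2p}\ \ge\ \tfrac14\,e^{-C(\alpha)p}.
\end{equation*}
Given $t$, let $p$ be the least integer with $g(p)\ge 2\max\{\sqrt3,3^{1/\alpha}\}\,C(\alpha)\,t$, so $\tfrac12\norm{Z^*}_p\ge\max\{\sqrt3,3^{1/\alpha}\}\,t$ and, since $X^*=Z^*/\max\{\sqrt3,3^{1/\alpha}\}$ for this choice, $\Pr[\abs{X^*}\ge t]\ge\tfrac14 e^{-C(\alpha)p}$. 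Using the reverse scaling $g(\mu^2 p)\ge\mu\,g(p)$ for $\mu\ge 1$ and $g\asymp h$ one bounds $p\le C(\alpha)K(t)+1$, which—with $K(t)\ge1$ in the stated range, absorbing $\tfrac14$ into the exponent—proves \eqref{eq:tail_prob_lower_logconcave}; the range $t\le h(1)$ follows by monotonicity of $t\mapsto\Pr[\abs{X^*}\ge t]$ from \eqref{eq:tail_prob_lower_logconcave} at $t=h(1)$, where $K(h(1))\le 4$. Finally, for $\alpha\le1$ one has $\beta=\infty$, so $h(p)\ge\sqrt{p}\,A$ and $h(p)\ge p^{1/\alpha}B_p\ge p^{1/\alpha}\norm{a\odot L}_\infty$ force $K(t)\le\min\{t^2/A^2,\ t^\alpha/\norm{a\odot L}_\beta^\alpha\}$, so \eqref{eq:tail_prob_lower} follows from \eqref{eq:tail_prob_lower_logconcave}.

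\textbf{Main obstacle.} The delicate point is the moment doubling $\norm{Z^*}_{2p}\le C(\alpha)\norm{Z^*}_p$ underlying the lower bound: for $\alpha<1$ the summands $Z_i$ are heavy-tailed with log-convex tails, so this cannot be obtained from Borell's lemma or any log-concavity property of the law of $Z^*$. The plan circumvents this by reading the doubling off the \emph{two-sided} explicit bounds of \cref{thm:moment_bound}, reducing it to the elementary rearrangement inequality for $\ell_\beta$-norms above (for $\alpha\ge 1$, where the tails are log-concave, one may instead quote the Gluskin--Kwapie\'n estimates directly). The remaining work is the bookkeeping, recurring in both parts, relating the ``inverse function'' $K$ to dilations $K(\lambda t)$ and to the simpler quantity $\min\{t^2/\norm{a\odot\bar L}_2^2,\ t^\alpha/\norm{a\odot L}_\beta^\alpha\}$; this rests only on the crude two-sided scaling $\min\{\lambda^{1/2},\lambda^{1/\alpha}\}\le h(\lambda p)/h(p)\le\max\{\lambda^{1/2},\lambda^{1/\alpha}\}$, but must be tracked carefully to pin down the stated ranges of validity and the absence of a prefactor in \eqref{eq:tail_prob_lower_logconcave}.
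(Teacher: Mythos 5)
Your proposal is correct and follows essentially the same route as the paper: the two-sided moment bounds of \cref{thm:moment_bound} are converted into tail bounds via Markov/Chernoff for the upper estimates (the paper obtains \cref{eq:tail_prob_upper} through the GBO-norm bound of \cref{thm:GBO_bound} and cites the corollary of Gluskin--Kwapie\'n for the $K(t)$ forms, which is the same moment-to-tail argument you spell out) and via the Paley--Zygmund inequality with a moment-doubling estimate $\norm{Z^*}_{2p}\le C(\alpha)\norm{Z^*}_p$ for the lower estimates, exactly as in the paper's proof, which reads the doubling off the explicit two-sided bounds just as you do. The only slip is the step ``$\lambda^{2/\alpha}\le\lambda$ since $\alpha\le 2$'' in your upper-bound scaling, which fails for $\alpha>2$ even though the theorem is stated for all $\alpha>0$; this is cosmetic, since the general scaling $\min\{\lambda^{1/2},\lambda^{1/\alpha}\}\le h(\lambda p)/h(p)\le\max\{\lambda^{1/2},\lambda^{1/\alpha}\}$ noted in your final paragraph repairs it with exponent $\max\{2,\alpha\}$ in place of $2$.
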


Inequalities~\cref{eq:tail_prob_upper_logconcave} and~\cref{eq:tail_prob_lower_logconcave} provide an optimal tail bounds for all $\alpha > 0$ and all $t\ge\|a\odot\overline{L}\|_2 + \|a\odot L\|_{\infty}$. Unfortunately, these tail bounds involve $K(t)$ which is not available in closed form because the moment bound includes $(a_iL_i:1\le i\le p)$. Inequality~\cref{eq:tail_prob_upper} provides a simple tail upper bound for all $\alpha > 0$. While this upper bound may not be optimal for all $\alpha > 0$, it is a tight bound when $\alpha \le 1$. This tightness comes from the fact that $\norm{a \odot L}_\infty = \norm{(a_i L_i: i\leq p)}_\beta = \norm{a \odot L}_\beta$ for $\alpha \leq 1$.

\section{Application} \label{sec:application}
In this section, we apply the techniques and results developed in previous sections to derive dimension/sample complexity in an application related to covariance matrix estimation. 
Suppose we observe $m$ sets of $p$-dimensional random vectors and estimate the covariance matrix within each sample set. Namely, for $l \in [m]$, $X^{(l)}_1, \dots, X^{(l)}_{n}$ are independent random vectors with mean zero and covariance matrix $\Sigma^{(l)}$. A straightforward estimator of $\Sigma^{(l)}$ is the empirical covariance matrix $\hat{\Sigma}^{(l)} = \frac{1}{n} \sum_{k=1}^{n} X^{(l)}_k X^{(l)\top}_k$.
We attempt to identify the shared covariance structure among $\{\Sigma^{(l)}: l \in [m]\}$ based on $\{\hat\Sigma^{(l)}: l \in [m]\}$. An important challenge in statistical inference is estimating heterogeneity across the $m$ estimators. 
For example, in Chapter 2 of \cite{bong2022discovery}, the author developed and studied a method of identifying the common graphical structure across multiple sets of matrix-variate random samples. Their theoretical analyses configured the leading term of the inference error to be $\sup_{i,j \in [p]} \sum_{l=1}^m (\hat\Sigma_{ij}^{(l)} - \Sigma_{ij}^{(l)})^2$, which evaluates the variability of the estimation error in $\hat{\Sigma}^{(l)}$ across $l \in [m]$. It is desired to configure the tight concentration inequality for the error term.

For simplicity of the analysis, we assume each entry of the random vectors is sub-Gaussian: $\norm{X^{(l)}_{k,i}}_{\psi_{2}} \leq 1$ for $i \in [p]$, $k \in [n]$ and $l \in [m]$. Recall that $\norm{\cdot}_{\psi_{\alpha}}$ is the Orlicz norm with $\psi_{\alpha}(x) = \exp(x^\alpha) - 1$.
Based on Proposition D.2 of \cite{kuchibhotla2018moving} and the centering lemma (Exercise 2.7.10, \cite{vershynin2018}), $X_{k,i}^{(l)} X_{k,j}^{(l)}$ is sub-exponential: $\norm{X_{k,i}^{(l)} X_{k,j}^{(l)} - \Sigma_{ij}^{(l)}}_{\psi_1} \leq C$ for some universal constant $C > 0$, and Theorem 3.1 of \cite{kuchibhotla2018moving} implies that
\begin{equation} \label{eq:GBO_sigma_error}
    \norm*{\hat\Sigma_{ij}^{(l)} - \Sigma_{ij}^{(l)}}_{\phi_{1, \frac{C}{\sqrt{n}}}}
    = \norm*{\frac{1}{n} \sum_{k=1}^{n} X_{k,i}^{(l)} X_{k,j}^{(l)} - \Sigma_{ij}^{(l)}}_{\phi_{1, \frac{C}{\sqrt{n}}}}
    \leq \frac{C}{\sqrt{n}}.
\end{equation}
It is easy to check that $(\hat\Sigma_{ij}^{(l)} - \Sigma_{ij}^{(l)})^2$ has a mixture sub-exponential and sub-Weibull$(1/2)$ tails: namely,
\begin{equation} \label{eq:mixture_tail_sigma_se}
    \Pr[(\hat\Sigma_{ij}^{(l)} - \Sigma_{ij}^{(l)})^2 \geq t]
    \leq 2 \exp\left( - C \min\{ n t, n \sqrt{t} \}\right).
\end{equation}
Based on this observation, we obtain \cref{thm:tail_bound_sigma_sse} for the tail probability bound of $\sum_{l=1}^m (\hat\Sigma_{ij}^{(l)} - \Sigma_{ij}^{(l)})^2$, using the main results in \cref{sec:main_results}. See \cref{sec:pf_tail_bound_sigma_sse} for the proof details. It is important to note that the results in~\cref{sec:main_results} are not readily applicable (particularly, the lower bounds) because the tail in~\cref{eq:mixture_tail_sigma_se} is a mixture of exponential and sub-Weibull$(1/2)$ and not a mixture of Gaussian and sub-Weibull($1/2$) as required by the GBO norm.

\begin{theorem} \label{thm:tail_bound_sigma_sse}
For $\hat\Sigma$ satisfying the aforementioned conditions,
\begin{equation} \label{eq:upper_bound_sigma_sse}
\begin{aligned} 
    & \Pr\left[ \sum_{l=1}^m (\hat\Sigma_{ij}^{(l)} - \Sigma_{ij}^{(l)})^2 \geq t + C \frac{m}{n} \right]
    \leq 4 \exp\left(- C \min\left\{\frac{n^2t^2}{m}, nt, n\sqrt{t} \right\} \right), ~\forall t > 0,
\end{aligned}
\end{equation}
or equivalently,
\begin{equation*}
    \sum_{l=1}^m (\hat\Sigma_{ij}^{(l)} - \Sigma_{ij}^{(l)})^2 \leq C \left( \frac{m + \sqrt{m\nu} + \nu}{n} + \frac{\nu^2}{n^2} \right),
\end{equation*} 
with probability at least $1 - e^{-\nu}$ for any $\nu > 0$.
On the other hand, suppose that there exists $C > 0$ such that $\Pr[\abs{\hat\Sigma_{ij}^{(l)} - \Sigma_{ij}^{(l)}} \geq t] {\geq} \exp( - C \min\{ nt, nt^2\} ), ~ \forall t > 0$. Then,
\begin{equation} \label{eq:lower_bound_sigma_sse}
\begin{aligned}
    \Pr\left[\sum_{l=1}^m (\hat\Sigma_{ij}^{(l)} - \Sigma_{ij}^{(l)})^2 \geq t + C\frac{m}{n} \right]
    & \geq \frac{1}{C} \exp\left(- C \min\left\{\frac{n^2t^2}{m}, nt, n\sqrt{t} \right\} \right), ~\forall t > 0.
\end{aligned}    
\end{equation}
\end{theorem}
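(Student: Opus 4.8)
Set $W_l:=\hat\Sigma_{ij}^{(l)}-\Sigma_{ij}^{(l)}$ and $Y_l:=W_l^2$, so the quantity of interest is $S:=\sum_{l=1}^m Y_l$, a sum of nonnegative variables that are independent because the $m$ sample sets are. By \cref{eq:mixture_tail_sigma_se}, $\Pr[Y_l\ge v]\le 2\exp(-c\min\{nv,n\sqrt v\})$, and integrating this tail gives $\Exp Y_l\le C/n$ and $\Exp Y_l^2\le C/n^2$, hence $\Exp S\asymp m/n$. The crucial point is that $Y_l$ carries a mixture of an \emph{exponential} tail (for small values) and a sub-Weibull$(1/2)$ tail (for large values) rather than a mixture of Gaussian and sub-Weibull$(1/2)$ tails, so it is \emph{not} a $\phi_{1/2,L}$-type (GBO) variable and \cref{thm:tail_bound} cannot be quoted directly; the plan is to rebuild its conclusion from the two ``pure'' halves of the tail, separated at the corner $v=1$.

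\emph{Upper bound.} Truncate: $Y_l=A_l+B_l$ with $A_l:=Y_l\mathbf 1\{Y_l\le 1\}$ and $B_l:=Y_l\mathbf 1\{Y_l>1\}$. Then $A_l\in[0,1]$ is sub-exponential with $\norm{A_l}_{\psi_1}\le C/n$ and $\Exp A_l^2\le C/n^2$, so Bernstein's inequality for independent sub-exponentials gives $\Pr[\sum_l(A_l-\Exp A_l)\ge t/2]\le 2\exp(-c\min\{n^2t^2/m,nt\})$. And $B_l\ge 0$ obeys $\Pr[B_l\ge v]\le 2\exp(-cn\sqrt v)$ for all $v$, so $\norm{B_l}_{\psi_{1/2}}\le C/n^2$ (with $\Exp B_l\le Ce^{-cn}$ negligible); feeding $\sum_l(B_l-\Exp B_l)$ into the $\psi_{1/2}$-moment bound \cref{eq:moment_upper_psi} (take $\alpha=1/2$, unit weights) and then Markov's inequality yields $\Pr[\sum_l B_l\ge t/2]\le 2\exp(-c\min\{n^2t^2/m,n\sqrt t\})$. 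Since $\Exp S\le Cm/n$, adding these two bounds gives \cref{eq:upper_bound_sigma_sse}, because the minimum in each pair is at least $\min\{n^2t^2/m,nt,n\sqrt t\}$. The equivalent high-probability statement follows by choosing $t\asymp \sqrt{m\nu}/n+\nu/n+\nu^2/n^2$, which makes the right side of \cref{eq:upper_bound_sigma_sse} at most $e^{-\nu}$.

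\emph{Lower bound.} The extra hypothesis $\Pr[\abs{W_l}\ge r]\ge\exp(-C\min\{nr,nr^2\})$ forces $\Pr[\abs{W_l}\ge n^{-1/2}]\ge e^{-C}$, hence $\mu:=\Exp Y_l\ge c/n$ and $\Exp S\ge cm/n$; we may therefore take the shift constant $C$ in ``$t+Cm/n$'' small enough that $t+Cm/n\le\tfrac12\Exp S$ whenever $t\le c'm/n$, and $t+Cm/n\le 2t$ whenever $t>c'm/n$. Now split into three ranges. (i) If $t\le c'm/n$: since $S\ge 0$ and $\Exp S^2\le(\Exp S)^2+m\Var(Y_1)\le C(\Exp S)^2$ (using $\Var(Y_1)\le C/n^2\le C\mu^2$), the Paley--Zygmund inequality gives $\Pr[S\ge\tfrac12\Exp S]\ge c''>0$; as $\exp(-C\min\{\cdots\})\le 1$ it suffices that $1/C\le c''$, true once $C$ is large, so \cref{eq:lower_bound_sigma_sse} holds. (ii) If $t>c'm/n$ and $\min\{n^2t^2/m,nt,n\sqrt t\}\in\{nt,n\sqrt t\}$: keep a single term, $\Pr[S\ge t+Cm/n]\ge\Pr[Y_1\ge t+Cm/n]=\Pr[\abs{W_1}\ge\sqrt{t+Cm/n}]$, and apply the hypothesis with $r=\sqrt{t+Cm/n}\le\sqrt{2t}$ to get $\ge\exp(-C\min\{n\sqrt{2t},2nt\})\ge\tfrac1C\exp(-C\min\{nt,n\sqrt t\})$, matching the target. (iii) The leftover intermediate $t$ --- where $n^2t^2/m$ is the minimum yet $t>c'm/n$, which can occur only when $m\lesssim n$ --- are handled by copying the lower-bound argument behind \cref{thm:moment_bound} and \cref{thm:tail_bound}: derive matching moment lower bounds for $S-\Exp S$ from Lata{\l}a's sequence-Orlicz-norm technique (Examples~3.2 and~3.3 of \cite{latala1997estimation}, for the exponential and sub-Weibull$(1/2)$ portions of the tail of $Y_l$ respectively) and convert them to tail lower bounds via the one-sided Paley--Zygmund argument of the corollary in \cite{gluskin1995tail} and Theorem~2.1 of \cite{hitczenko1999note}.

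\emph{Where the work is.} The only genuine obstacle is the mismatch between the exponential-plus-sub-Weibull$(1/2)$ tail of $Y_l=W_l^2$ and the Gaussian-plus-sub-Weibull$(1/2)$ tail built into the GBO norm, which is exactly what prevents a direct appeal to the results of \cref{sec:main_results}; the truncation (upper bound) and the Paley--Zygmund/single-term dichotomy (lower bound) are the workarounds. Among the pieces, the most delicate is the one-sided moderate-deviation lower bound in range (iii): there the elementary single-term estimate is too weak and one must reuse the full moment-plus-Paley--Zygmund machinery, together with the usual symmetrization step to pass from $\abs{S-\Exp S}$ to its positive tail.
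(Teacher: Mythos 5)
Your upper bound is correct and is essentially the paper's argument in different clothing: the paper splits the mixed tail through the distributional domination $(\hat\Sigma_{ij}^{(l)}-\Sigma_{ij}^{(l)})^2 \le C\,Y^{(l)2}/n + C\,Y^{(l)4}/n^2 + \mu$ (obtained by the \cref{thm:Z_by_Y}-type representation of the exponential-plus-Weibull$(1/2)$ survival function) and then applies Theorem 3.1 of \cite{kuchibhotla2018moving} to each pure sum, whereas you split by truncating your $Y_l$ at the corner $Y_l=1$ and handle the two pieces with Bernstein and with \cref{eq:moment_upper_psi} plus Markov. Both routes give \cref{eq:upper_bound_sigma_sse} (your $n^2t^2/m$ for the $B$-part is weaker than the attainable $n^4t^2/m$, but harmless), and the high-probability restatement follows as you say.

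For the lower bound you genuinely diverge from the paper, which dominates each summand from below by $C\max\{Y^{(l)2}/n,\,Y^{(l)4}/n^2\}$, uses $\sum_l\max\{\cdot,\cdot\}\ge\max\{\sum_l\cdot,\,\sum_l\cdot\}$, and then quotes the already-established tail lower bound of \cref{thm:tail_bound} for each pure sum, covering all $t$ at once. Your regimes (i) and (ii) are fine (with the caveat that shrinking the shift constant in ``$t+Cm/n$'' is legitimate only because the theorem's constants are existential/generic). The genuine gap is regime (iii): you only gesture at ``copying'' the Lata{\l}a/Gluskin--Kwapie\'n/Hitczenko--Montgomery-Smith machinery, but that machinery --- in particular \cref{thm:log_phi_bound} and Examples 3.2--3.3 of \cite{latala1997estimation} --- is built for \emph{symmetric} summands with a Gaussian-plus-Weibull$(\alpha)$ tail, while your $Y_l$ is nonnegative with an exponential-plus-Weibull$(1/2)$ tail, so nothing transfers verbatim and you would have to redo the sequence-Orlicz estimates (and a symmetrization/de-centering step) from scratch; as written this is an unproved step, and it is exactly the step the paper's max-of-sums reduction is designed to avoid. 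Fortunately the regime is far easier than your plan suggests: there $c'm/n< t\le m/n$, so $n^2t^2/m\asymp m$ and the target is only $\frac{1}{C}e^{-Cm}$; by independence and the hypothesis, $\Pr\big[S\ge t+Cm/n\big]\ge\prod_{l=1}^m\Pr\big[\,\abs{\hat\Sigma_{ij}^{(l)}-\Sigma_{ij}^{(l)}}\ge\sqrt{(1+C)/n}\,\big]\ge e^{-C'm}$, since each factor is at least $\exp\big(-C\min\{\sqrt{(1+C)n},\,1+C\}\big)\ge e^{-C(1+C)}$. With that one-line replacement your argument closes; alternatively, adopt the paper's reduction to the two pure sums, which removes the case analysis entirely.
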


The required sample complexity for the convergence of the leading error term to $0$ in probability is ${m+\log(q)} = o({n})$, which is better than ${m+\log(q)^2} = o({n})$ in Lemma B.2.5 of \cite{bong2022discovery}. Moreover, \cref{eq:lower_bound_sigma_sse} shows the tightness of the upper bound. The condition $\Pr[\abs{\hat\Sigma_{ij}^{(l)} - \Sigma_{ij}^{(l)}} \geq t] {\geq} \exp( - C \min\{ nt, nt^2\} ), ~ \forall t > 0$ is feasible in covariance estimation. For example, suppose that $X_k^{(l)}$ is a multivariate Gaussian random vector with the identity covariance matrix. Then, for any $i \in [p]$, $\hat{\Sigma}_{ii}^{(l)}$ is a chi-squared distribution random variable with degree of freedom $n$. In this case, the lower bound for the tail probability lower bound satisfies the condition. We refer to Corollary 3 of \cite{zhang2020non} for the tail probability lower bound of chi-squared random variables.

\section{Discussion}

In this paper, we presented concentration inequalities in moments, the GBO norms, and tail probability bounds for weighted sums of independent $\text{sub-Weibull}(\alpha,L)$ random variables. The resulting moment bound was in form of Rosenthal's inequalities, and the tail probability bounds provided generalized Bernstein's inequalities for $\text{sub-Weibull}(\alpha,L)$ random variables. 
We also showed the tightness of the proposed bounds by obtaining the lower bounds of the concentration inequalities in the worst case.

We applied the main results to a graphical model inference problem. Although the summands of interest were not sub-Weibull and had a mixture of sub-exponential and sub-Weibull$({1}/{2}$) tails instead, we were still able to provide a tight bound for the inference error, using the main results. This improved the sample complexity in the original work (Chapter 2, \cite{bong2022discovery}) for the convergence of the leading error term to $0$ in probability. Likewise, the developed concentration inequalities will be useful in binding the size of summed random variables when the random variables have mixture tails, which U-statistics often entail with (e.g., see \cite{gine2000exponential} and \cite{bakhshizadeh2023exponential}).

There are several interesting future directions based on this work. Firstly, our bounds are derived for random variables under the condition of the GBO norm. Often in applications, it would be useful to take into account the variance of the random variables involved. In \cite{kuchibhotla2018moving}, the authors provided upper bounds for the tails and moments under the additional condition of variance. To our knowledge, lower bounds on the tail are not available for sums of independent sub-Weibull random variables with an additional condition on variance. Secondly, mixture of different tails appear naturally for $U$-statistics as shown in \cite{gine2000exponential}, but optimal tail bounds are not available unless the kernel is assumed to be bounded. It would be interesting to develop a parallel framework for $U$-statistics with general decay on the kernel.

\section{Proofs}

\subsection{Preliminary Lemmas} \label{sec:pre_lemmas}

Following the arguments in \cite{latala1997estimation}, we define
\begin{equation*}
\begin{aligned}
    \varphi_p(x) & := \frac{\abs{1+x}^p + \abs{1-x}^p}{2}, \\
    \phi_p(X) & := \Exp[\varphi_p(X)], \\
\end{aligned}
\end{equation*}
for $x \in \reals$ and a random variable $X$. We define the following Orlicz norm for a sequence of symmetric random variables $X_1, \dots, X_n$:
\begin{equation} \label{eq:sequence_Orlicz_norm}
    \mnorm{(X_i)}_p := \inf\{\eta > 0: \sum_i \log(\phi_p(X_i/\eta)) < p \}.
\end{equation}
Due to technical issues in the forthcoming proofs, we use a different definition of sequence Orlicz norm than that in \cite{latala1997estimation}, which was given by $\inf\{\eta > 0: \sum_i \log(\phi_p(X_i/\eta)) \leq p \}$. We note that $\phi_p(X/\eta)$ is strictly decreasing for $\eta \in (0, \infty)$ unless $X = 0$ almost surely, so the resulting value of $\mnorm{(X_i)}_p$ does not change. 
The following lemma references to \cite{latala1997estimation}. We note that the lemma holds for non-integer $p$.

\begin{lemma}[$p$-th moment bound; Theorem 2, \cite{latala1997estimation}]
\label{thm:moment_bound_by_Orlicz_norm}
    Let $X_1, \dots, X_n$ be a sequence of independent symmetric random variables, and $p \geq 2$. then,
    \begin{equation*}
        \frac{e-1}{2e^2} \mnorm{(X_i)}_p 
        \leq \norm{X_1 + \dots + X_n}_p
        \leq e \mnorm{(X_i)}_p.
    \end{equation*}
\end{lemma}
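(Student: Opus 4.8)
The plan is to follow Latała's proof of Theorem~2 in \cite{latala1997estimation}. By homogeneity (rescaling all the $X_i$ by a common positive factor multiplies both $\mnorm{(X_i)}_p$ and $\norm{X_1+\dots+X_n}_p$ by that factor) I may assume $\mnorm{(X_i)}_p=1$; since $\eta\mapsto\sum_i\log\phi_p(X_i/\eta)$ is continuous and strictly decreasing on $(0,\infty)$ unless every $X_i$ vanishes, this is equivalent to $\sum_i\log\phi_p(X_i)=p$, i.e.\ $\prod_i\phi_p(X_i)=e^p$, and it remains to prove $\tfrac{e-1}{2e^2}\le\norm{S_n}_p\le e$ with $S_n:=X_1+\dots+X_n$. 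I would first record a few elementary facts: $\varphi_p$ is even and convex with $\varphi_p(0)=1$ and $\max\{1,|x|^p\}\le\varphi_p(x)\le(1+|x|)^p\le 2\varphi_p(x)$; for symmetric $X$ one has $\Exp_X|a+X|^p=|a|^p\phi_p(X/a)$, so $a\mapsto\Exp_X|a+X|^p$ is even, convex and minimized at $a=0$; and, by Jensen after conditioning on the omitted (symmetric, independent) summands, $\norm{S_k}_p$ is non-decreasing in $k$ and $\norm{S_n}_p\ge\norm{\sum_{i\in I}X_i}_p$ for every $I\subseteq\{1,\dots,n\}$.

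\emph{Upper bound.} I would prove the sub-lemma $\Exp\bigl|1+\textstyle\sum_{i=1}^{n}X_i\bigr|^p\le\prod_{i=1}^{n}\phi_p(X_i)$ by induction on $n$. Writing $W=1+\sum_{i<n}X_i$ (so $\Exp W=1$ and $W$ is independent of $X_n$), the inductive step is the one-variable inequality $\Exp|W+X_n|^p\le\phi_p(X_n)\,\Exp|W|^p$; using $X_n\stackrel{d}{=}-X_n$ to symmetrize together with Fubini, this follows from the pointwise bound $\Exp_W\bigl[|W+x|^p+|W-x|^p\bigr]\le\bigl(|1+x|^p+|1-x|^p\bigr)\Exp_W|W|^p$ for each real $x$, which in turn reduces (expanding the $p$-th powers) to the moment comparison $\Exp|W|^q\le\Exp|W|^p$ for all $q\le p$ — valid because $\norm{W}_p\ge\norm{W}_1\ge|\Exp W|=1$. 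Granting the sub-lemma, $\Exp|S_n|^p\le\Exp|1+S_n|^p\le\prod_i\phi_p(X_i)=e^p$, the first inequality because $a\mapsto\Exp|a+S_n|^p$ is convex and minimized at $a=0$ ($S_n$ being symmetric); hence $\norm{S_n}_p\le e$.

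\emph{Lower bound.} This is the delicate direction, and I would split it in two. If some $\phi_p(X_{i_0})$ is of order $e^{cp}$ (for a fixed $c>0$), then $\norm{S_n}_p\ge\norm{X_{i_0}}_p\ge\phi_p(X_{i_0})^{1/p}-1$ — the last step from $\phi_p(X_{i_0})=\Exp\varphi_p(X_{i_0})\le\Exp(1+|X_{i_0}|)^p\le(1+\norm{X_{i_0}}_p)^p$ — and this is bounded below by an absolute constant. In the complementary "spread-out" case, the normalization $\sum_i\log\phi_p(X_i)=p$ forces $\sum_i(\phi_p(X_i)-1)$, which is $\ge\sum_i\log\phi_p(X_i)=p$, to be of order $p$ while no single summand dominates; I would then extract a sub-family $I$ of indices along which this second-order mass accumulates to a definite fraction of $p$ and, using $\norm{S_n}_p\ge\norm{\sum_{i\in I}X_i}_p$, run a Paley--Zygmund / second-moment comparison between $\norm{\cdot}_q$ and $\norm{\cdot}_{2q}$ of $\sum_{i\in I}X_i$ for a suitable $q\le p$, in the style of the corollary of \cite{gluskin1995tail} and Theorem~2.1 of \cite{hitczenko1999note}, to conclude $\norm{S_n}_p\ge\tfrac{e-1}{2e^2}$.

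\emph{Main obstacle.} The upper bound is essentially mechanical once the one-variable moment inequality $\Exp|W+X|^p\le\phi_p(X)\,\Exp|W|^p$ (for $\Exp W=1$) is established — for non-integer $p$ one must handle the expansion of $|W\pm x|^p$ with some care, e.g.\ via the power series of $\varphi_p$ or a density argument — and the constant $e$ simply traces back to $\prod_i\phi_p(X_i)=e^p$. The genuinely hard part is the lower bound with an \emph{absolute} constant: a Rosenthal-type estimate would only yield a $p$-dependent bound, so one is pushed into the Paley--Zygmund block argument, and the somewhat lossy value $\tfrac{e-1}{2e^2}$ is precisely the price of that route. Since the statement is exactly Theorem~2 of \cite{latala1997estimation}, the cleanest option in the paper itself is to cite it directly, the above being the shape of its proof.
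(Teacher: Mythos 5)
The paper does not actually prove this lemma: it is imported verbatim as Theorem~2 of \cite{latala1997estimation}, and the only original content surrounding it is the observation that replacing ``$\leq p$'' by ``$< p$'' in the definition \cref{eq:sequence_Orlicz_norm} does not change the value of $\mnorm{(X_i)}_p$ (since $\eta \mapsto \phi_p(X/\eta)$ is strictly decreasing), together with the remark that the statement holds for non-integer $p$. So your closing suggestion --- cite Lata{\l}a directly --- is exactly what the paper does, and for the paper's purposes that is the whole ``proof.''

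Judged as a standalone proof, however, your sketch has genuine gaps. For the upper bound, everything rests on the one-variable inequality $\Exp|W+X|^p \leq \phi_p(X)\,\Exp|W|^p$ for $\Exp W = 1$, and your reduction ``by expanding the $p$-th powers'' to the comparison $\Exp|W|^q \leq \Exp|W|^p$ is only literal when $p$ is an even integer; since the lemma is needed here precisely for arbitrary real $p \geq 2$ (the paper stresses the non-integer case), gesturing at ``the power series of $\varphi_p$ or a density argument'' leaves exactly the case that matters unproven. More seriously, the lower bound --- the direction you yourself call delicate --- is a plan rather than a proof: in the ``spread-out'' case you propose to extract a block $I$ and run a Paley--Zygmund comparison between $\norm{\cdot}_q$ and $\norm{\cdot}_{2q}$, but you never say how the $2q$-th moment of $\sum_{i\in I}X_i$ is to be controlled in terms of its $q$-th moment (this would require relating $\mnorm{(X_i)}_q$ to $\mnorm{(X_i)}_{2q}$, which is itself nontrivial and is nowhere addressed), nor how the extraction is performed, and nothing in the outline yields an absolute constant --- let alone the specific value $\tfrac{e-1}{2e^2}$, which you simply assert is ``the price of that route.'' As written, the lower half of the claimed inequality is not established; either carry that argument out in full or, as the paper does, cite Lata{\l}a's Theorem~2 and confine the original contribution to the remark about the strict-inequality variant of the norm.
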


In \cite{latala1997estimation}, the authors demonstrated the use of \cref{thm:moment_bound_by_Orlicz_norm} in linear sums of iid symmetric random variables when the random variables have logarithmically concave or convex tails. (See the examples in Sections 3.2 and 3.3 therein.) Although $Z_i$ does not have such tails, we can use the results in \cite{latala1997estimation} based on the relationship between $Z_i$ and the following random variable. Let $Y_i$ be an iid symmetric sub-Gaussian random variable satisfying 
\begin{equation} \label{eq:Y}
    \Pr[\abs{Y_i} \geq t] = e^{-t^2}, ~~ \forall t \geq 0, ~~ i=1, \dots, n.
\end{equation}
Then, $Y_i$ relates to $Z_i$ through the following lemma:

\begin{lemma} \label{thm:Z_by_Y}
    \begin{equation}
        \abs{Z_i} \overset{d}{=} \max\{\abs{Y_i}, L_i \abs{Y_i}^{2/\alpha}\}.
    \end{equation}
\end{lemma}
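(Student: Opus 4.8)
The plan is to verify the claimed distributional identity at the level of survival functions. Since $\abs{Z_i}$ and $W_i := \max\{\abs{Y_i},\, L_i\abs{Y_i}^{2/\alpha}\}$ are both nonnegative random variables, it suffices to show $\Pr[W_i \geq t] = \Pr[\abs{Z_i} \geq t]$ for every $t \geq 0$; by \cref{eq:Z_sub_weibull_tail} the target value is $\exp(-\min\{t^2,\,(t/L_i)^\alpha\})$, and the case $t=0$ is trivial since both sides equal $1$, so take $t>0$.

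The key observation is that both maps $y \mapsto y$ and $y \mapsto L_i y^{2/\alpha}$ are nondecreasing on $[0,\infty)$, so the event $\{W_i \geq t\}$ can be rephrased in terms of $\abs{Y_i}^2$ alone. First I would note that $\{\abs{Y_i} \geq t\} = \{\abs{Y_i}^2 \geq t^2\}$ and, for $t>0$, $\{L_i\abs{Y_i}^{2/\alpha} \geq t\} = \{\abs{Y_i}^{2/\alpha} \geq t/L_i\} = \{\abs{Y_i}^2 \geq (t/L_i)^\alpha\}$ (with the convention $(t/L_i)^\alpha = +\infty$ when $L_i=0$, which stays consistent). Since $W_i \geq t$ iff at least one of the two quantities is $\geq t$, taking the union and using that a union of lower-level sets is governed by the minimum,
\begin{equation*}
    \{W_i \geq t\} = \{\abs{Y_i} \geq t\} \cup \{L_i \abs{Y_i}^{2/\alpha} \geq t\} = \left\{\abs{Y_i}^2 \geq \min\{t^2,\, (t/L_i)^\alpha\}\right\}.
\end{equation*}
Finally, \cref{eq:Y} gives $\Pr[\abs{Y_i} \geq s] = e^{-s^2}$ for $s \geq 0$, equivalently $\Pr[\abs{Y_i}^2 \geq u] = e^{-u}$ for $u \geq 0$ (i.e., $\abs{Y_i}^2$ is standard exponential); applying this with $u = \min\{t^2,\, (t/L_i)^\alpha\} \geq 0$ yields $\Pr[W_i \geq t] = \exp(-\min\{t^2,\,(t/L_i)^\alpha\}) = \Pr[\abs{Z_i} \geq t]$, as claimed.

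There is no substantive obstacle: the proof reduces to a one-line computation once the events are rewritten in terms of $\abs{Y_i}^2$. The only points needing a little care are bookkeeping ones — ensuring the exponent $2/\alpha$ is well-defined (true since $\alpha>0$) and handling the degenerate value $L_i=0$, which does not affect anything since $L_i$ enters only through the product $L_i\abs{Y_i}^{2/\alpha}$.
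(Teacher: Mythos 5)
Your proposal is correct, and it reaches the conclusion by a noticeably different computation than the paper. The paper's proof splits into cases according to whether $t$ lies below or above the crossover point $L_i^{-\alpha/(2-\alpha)}$ at which $y$ and $L_i y^{2/\alpha}$ exchange order (explicitly invoking $\alpha \leq 1$ for that monotonicity fact), and then evaluates $\Pr[t \leq \abs{Y_i} \leq L_i^{-\alpha/(2-\alpha)}]$ and $\Pr[L_i\abs{Y_i}^{2/\alpha} \geq t,\ \abs{Y_i} > L_i^{-\alpha/(2-\alpha)}]$ separately in each regime. You instead rewrite both events $\{\abs{Y_i} \geq t\}$ and $\{L_i\abs{Y_i}^{2/\alpha} \geq t\}$ as upper-level sets of the single standard-exponential variable $\abs{Y_i}^2$, so their union is exactly $\{\abs{Y_i}^2 \geq \min\{t^2,(t/L_i)^\alpha\}\}$ and the survival function $\exp(-\min\{t^2,(t/L_i)^\alpha\})$ drops out in one line, matching \cref{eq:Z_sub_weibull_tail}. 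What your route buys: no case analysis, no use of the restriction $\alpha \leq 1$ (your argument is valid for every $\alpha > 0$), and an explicit treatment of the degenerate case $L_i = 0$; what the paper's route makes visible is which of the two terms in the maximum governs the tail in each regime, mirroring the two-regime form of \cref{eq:sub_weibull_tail}. Both are complete proofs of the lemma.
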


See \cref{sec:pf_lemmas} for the proof.

\subsection{Proofs for Main Results} \label{sec:pf_theorems}

\begin{proof}[Proof of Theorem~\ref{thm:moment_bound}]

For the upper bound in \cref{eq:moment_upper}, we follow the argument of \cite{kuchibhotla2018moving} using the symmetrization inequality (Proposition 6.3 of \cite{ledoux1991probability}) and Theorem 1.3.1 of \cite{de2012decoupling}. If we set $\mu_i = \max\{\sqrt{\log 2}, L_i(\log 2)^{1/\alpha}\}$,
\begin{equation} \label{eq:prob_bound_by_Z}
\begin{aligned}
    & \Pr[\abs{X_i} \geq t] \leq 2 \exp(- \min\{t^2, (t/L_i)^\alpha\}) \\ 
    & \Longrightarrow \Pr[\max\{\abs{X_i} - \mu_i,0\} \geq t] \leq \exp(- \min\{t^2, (t/L_i)^\alpha\}).
\end{aligned}\end{equation}
Because $\mu_i \leq C(\alpha) \bar{L}_i$, for $p \geq 1$,
\begin{equation*}\begin{aligned}
    \norm{X^*}_p 
    & \leq 2 \norm*{\sum_{i=1}^n \varepsilon_i a_i \max\{\abs{X_i} - \mu_i,0\}}_p 
    + 2 \sqrt{p} \norm{a \odot \mu}_2 \\
    & \leq 2 \norm*{Z^*}_p 
    + C(\alpha) \sqrt{p} \norm{a \odot \bar{L}}_2. \\
\end{aligned}\end{equation*} 
For the lower bound in \cref{eq:moment_lower}, $Z_i/\max\{\sqrt{3}, 3^{1/\alpha}\} \in \Phi_{\alpha,L_i}(1)$, so $\norm{Z^*/\max\{\sqrt{3}, 3^{1/\alpha}\}}_p$ lowerbounds $\sup_{X_i \in \Phi_{\alpha,L_i}(1)} \norm{X^*}_p$. Therefore, it suffices to derive the tight moment bound for $Z^*$. We consider the cases of $\alpha > 1$ and $\alpha \leq 1$, separately.

\vspace{0.1in}
{\bf Cases of $\alpha > 1$:  } Let $Z'_i = Z_i/\bar{L}_i$ and $a'_i = a_i \bar{L}_i$ so that $\Pr[\abs{Z'_i} \leq 1] = e^{-1}$. We define $N_i(t) = - \log(\Pr[\abs{Z'_i} \geq t]) = \min\{N_{i,1}(t), N_{i,2}(t)\}$, where $N_{i,1}(t) = \bar{L}_i^2 t^2$, and $N_{i,2}(t) = (\frac{\bar{L}_i}{L_i} t)^\alpha$. We also define the dual functions $N_{i}^*(t)$, $N_{i,1}^*(t)$ and $N_{i,2}^*(t)$ of $N_i$, $N_{i,1}$ and $N_{i,2}$:
\begin{equation*}
    N_{i}^*(t) = \sup\{st - N_{i}(s): s > 0\} \textand
    N_{i,k}^{(t)} = \sup\{st - N_{i}(s): s > 0 \} \text{ for } k \in \{1,2\}.
\end{equation*}
In particular, $N_{i,1}^*(t) = \frac{t^2}{4\bar{L}_i^2}$ and $N_{i,2}^*(t) = (\alpha-1)(\frac{L_i t}{\bar{L}_i \alpha})^{\alpha/(\alpha-1)}$. Because they are suprema of linear (i.e., convex) functions, $N_i$, $N_{i,1}$ and $N_{i,2}$ are convex. We claim that $N_i^*(t) = \max\{N_{i,1}^*(t), N_{i,2}^*(t)\}$. The claim is justified by
\begin{equation*}
\begin{aligned}
    N_i^*(t) 
    & = \sup\left\{st - \min\left\{ N_{i,1}(t), N_{i,2}(t)
    \right\}\right\} 
    = \sup\left\{st - N_{i,k}(t): k=1,2
    \right\} \\
    & = \max\{\sup\{st - N_{i,1}(t)\}, \sup\{st - N_{i,2}(t)\}\} 
    = \max\{N_{i,1}^*(t), N_{i,2}^*(t)\}.
\end{aligned}
\end{equation*}

Although $N_i(t)$ is not convex, $N_i(st) \leq s N_i(t)$ for any $t > 0$ and $s \in [0,1]$. So Theorem 1 of \cite{gluskin1995tail} is applicable. (See Remark 4 therein.) Here we state a slightly generalized version that does not require i.i.d. assumption made in \cite{gluskin1995tail} and provide a standalone proof in \cref{sec:pf_lemmas}.

\begin{lemma}[A generalized version of Theorem 1, \cite{gluskin1995tail}]
\label{thm:gluskin}
Let $Z_1, \dots, Z_n$ be random variables with $\Pr[\abs{Z_i} \geq t] = e^{-N_i(t)}$ and $a_1, \dots, a_n$ be a nonincreasing sequence of nonnegative real numbers.
For each $i$, we define 
\begin{equation*}
    \kappa_{i} = \inf\left\{c > 0: \int_s^\infty e^{-N_i(t)} dt \leq \int_s^\infty e^{-t/c} dt, \text{ for all } s > 0 \right\}    
\end{equation*}
and assume that $N_i(1) = 1$ and $N_i(st) \leq s N_i(t), \forall t > 0, \forall s \in [0,1]$. For each $1 \leq p < \infty$,
\begin{equation*}
\begin{aligned}
    & C_1 \Big( 
        \inf\Big\{t > 0: \tsum_{i \leq p} N_i^*(p a_i / t) \leq p \Big\}
        + \sqrt{p} \big(\tsum_{i > p} a_i^2\big)^{1/2}
    \Big) \\
    & \leq \norm{\tsum_{i=1}^n a_i Z_i}_p
    \leq C_2 \Big( 
        \inf\Big\{t > 0: \tsum_{i \leq p} N_i^*(p a_i / t) \leq p \Big\}
        + \sqrt{p} \big(\tsum_{i > p} a_i^2\big)^{1/2}
    \Big),
\end{aligned}
\end{equation*}
where $C_1 = \min\{ \min\{\frac{\Exp[\abs{Z_i}]}{4}: i \in [n]\}, \frac{1}{4e}\}$ and $C_2 = 3 + 2 \max\{\kappa_i: i \in [n]\}$.
\end{lemma}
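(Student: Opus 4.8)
The plan combines the sequence-Orlicz-norm machinery of \cref{thm:moment_bound_by_Orlicz_norm} (Lata{\l}a's Theorem~2, used as in Example~3.2 of \cite{latala1997estimation}) with the conditioning/sign-alignment argument of \cite{gluskin1995tail}, carried out for heterogeneous $N_i$ and without the i.i.d.\ assumption; the weaker hypothesis $N_i(st)\le sN_i(t)$ replaces convexity and is accommodated because it still forces $N_i(t)/t$ to be nondecreasing, hence $N_i(t)\ge t$ and $\Pr[\abs{Z_i}\ge t]\le e^{-t}$ for $t\ge1$, so that $\Exp[Z_i^2]$, $\Exp\abs{Z_i}$ and the constants $\kappa_i$ are all within absolute-constant factors of one another and of $1$. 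Since only the law of $\abs{Z_i}$ is prescribed we take the $Z_i$ symmetric (as in the proof of \cref{thm:moment_bound} and as required by \cref{thm:moment_bound_by_Orlicz_norm}). For $p\ge2$, \cref{thm:moment_bound_by_Orlicz_norm} reduces the statement to sandwiching the sequence Orlicz norm $\mnorm{(a_iZ_i)}_p=\inf\{\eta>0:\sum_i\log\phi_p(a_iZ_i/\eta)<p\}$ between absolute-constant multiples of
\[
    T_p:=\inf\Bigl\{t>0:\tsum_{i\le p}N_i^*(pa_i/t)\le p\Bigr\}+\sqrt p\,\Bigl(\tsum_{i>p}a_i^2\Bigr)^{1/2},
\]
the range $1\le p<2$ being recovered afterwards by comparison with $p=2$ (where $\norm*{\sum a_iZ_i}_2=(\sum a_i^2\Exp[Z_i^2])^{1/2}$) together with a Khinchine-type lower bound.

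The engine is a two-sided pointwise estimate of $\log\phi_p(\lambda Z_i)$, $\lambda>0$, separating a Gaussian regime (small $p\lambda$) from a large-deviation regime (large $p\lambda$). For the upper side, $\varphi_p(x)\le(1+\abs x)^p\le e^{p\abs x}$ gives $\phi_p(\lambda Z_i)\le\Exp[e^{p\lambda\abs{Z_i}}]=1+\int_0^\infty p\lambda\,e^{p\lambda t-N_i(t)}\,dt$, and a saddle-point estimate of this integral (splitting at the level where $N_i(t)/t$ crosses $2p\lambda$, using $N_i(t)\ge t$ for $t\ge1$) yields $\log\phi_p(\lambda Z_i)\le C\,N_i^*(Cp\lambda)$; while $\varphi_p(x)\le\cosh(p\abs x)\le e^{p^2x^2/2}$ for $\abs x\le1$, plus a truncation of $Z_i$ at level $1/\lambda$, gives $\log\phi_p(\lambda Z_i)\le C p^2\lambda^2\Exp[Z_i^2]$ in the Gaussian regime. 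Since $\varphi_p$ is even and both increasing and convex on $[0,\infty)$, the domination of $\abs{Z_i}$ by $\kappa_i\mathcal E_i$ in increasing convex order ($\mathcal E_i$ a standard exponential) — which is exactly the inequality defining $\kappa_i$ — also gives $\phi_p(\lambda Z_i)\le(1-p\lambda\kappa_i)^{-1}$ in the subcritical range and produces the $\max_i\kappa_i$ dependence. For the lower side, $\varphi_p(x)\ge1+\binom p2 x^2$ gives $\log\phi_p(\lambda Z_i)\ge c\,p^2\lambda^2\Exp[Z_i^2]$ while the argument stays bounded, and $\varphi_p(x)\ge\tfrac12(1+\abs x)^p$ gives $\phi_p(\lambda Z_i)\ge\tfrac12(\lambda t)^p e^{-N_i(t)}$ for every $t\ge1/\lambda$; the $\Exp\abs{Z_i}$ in $C_1$ traces back to this last bound.

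For the upper estimate $\mnorm{(a_iZ_i)}_p\le C_2 T_p$ I take $\eta=C_2 T_p$ and check $\sum_i\log\phi_p(a_iZ_i/\eta)<p$ by splitting $\sum_i$ at $i=p$: because $(a_i)$ is nonincreasing and $\eta$ dominates both pieces of $T_p$ (with $C_2\ge3$ this forces every $i>p$ into the Gaussian regime), the tail sum is $\le C p^2\eta^{-2}\sum_{i>p}a_i^2\Exp[Z_i^2]\le p/4$, and the head sum is $\le C\sum_{i\le p}N_i^*(Cpa_i/\eta)\le p/4$ by the definition of $T_p$ and monotonicity of the $N_i^*$, after absorbing $C$ into $C_2$. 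For the matching lower bound I bound $\norm*{\sum a_iZ_i}_p$ from below by two quantities whose maximum is $\ge c\,T_p$. The $\sqrt p(\sum_{i>p}a_i^2)^{1/2}$ part follows from $\norm*{\sum a_iZ_i}_p\ge c\,\mnorm{(a_iZ_i)}_p$ and the Gaussian pointwise lower bound applied to the $i>p$ coordinates. The $\inf\{t:\sum_{i\le p}N_i^*(pa_i/t)\le p\}$ part follows from the argument of \cite{gluskin1995tail}: conditioning on $Z_1,\dots,Z_k$ and applying Jensen to the remaining (conditionally mean-zero) coordinates shows $\Exp\abs*{\sum a_iZ_i}^p\ge(\sum_{i\le k}a_i t)^p\,2^{-k}e^{-\sum_{i\le k}N_i(t)}$ for every $k\le p$ and $t>0$, the factor $2^{-k}e^{-\sum_{i\le k}N_i(t)}=\Pr[Z_i\ge t,\ i\le k]$ coming from symmetry and independence; hence $\norm*{\sum a_iZ_i}_p\ge\tfrac1{2e}\sup\{\sum_{i\le k}a_i t:k\le p,\ \sum_{i\le k}N_i(t)\le p\}$, and a Legendre-duality optimization over $(k,t)$ identifies this supremum with a constant multiple of the infimum in $T_p$. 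Carrying the constants through yields exactly $C_1=\min\{\min_i\Exp\abs{Z_i}/4,\ 1/(4e)\}$ and $C_2=3+2\max_i\kappa_i$.

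The step I expect to be the main obstacle is the bookkeeping at the transition index $p$: proving, uniformly over rapidly- and slowly-decaying sequences $(a_i)$ and heterogeneous $N_i$, that the first $p$ coordinates are exactly those governed by the $N_i^*$-term while the rest sit in the Gaussian regime, and that the two branches of the pointwise estimate glue with matching constants so that $T_p$ is simultaneously an upper and a lower bound for $\mnorm{(a_iZ_i)}_p$. Two further points need care: the Legendre-duality step reconciling $\sup\{\sum_{i\le k}a_i t:\sum_{i\le k}N_i(t)\le p\}$ with $\inf\{t:\sum_{i\le p}N_i^*(pa_i/t)\le p\}$ in both directions — choosing the right number $k$ of active coordinates is the crux — and verifying that the saddle-point estimate $\log\phi_p(\lambda Z_i)\asymp N_i^*(p\lambda)$ survives with $N_i$ only star-shaped, which holds because $N_i^*$ is the Legendre transform of the convex minorant of $N_i$ (and, in the application, equals the explicit $\max\{N_{i,1}^*,N_{i,2}^*\}$ found in the proof of \cref{thm:moment_bound}) but requires a little care near the kink of $N_i$.
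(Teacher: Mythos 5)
Your route is genuinely different from the paper's: the paper proves this lemma directly in the style of Gluskin--Kwapie\'n, splitting the sum at index $p$, bounding the tail $\sum_{i>p}a_iZ_i$ by increasing-convex-order comparison with exponential variables through $\kappa_i$ (Karamata's inequality), bounding the head $\sum_{i\le p}a_iZ_i$ via the representation $Z_i\sim\epsilon_iN_i^{-1}(Y_i^2)$, the Young-type inequality $ptN_i^{-1}(s)\le N_i^*(pt)+s$ and the contraction principle, and for the lower bound using the conditioning/Jensen argument (which you reproduce almost verbatim for the $N_i^*$ term) together with a Rademacher/Jensen bound that is where $\Exp\abs{Z_i}$ enters. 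Your plan instead routes the upper bound and the Gaussian part of the lower bound through Lata{\l}a's sequence Orlicz norm with a two-regime estimate of $\log\phi_p(\lambda Z_i)$; this is in the spirit of what the paper does for its main theorem when $\alpha\le1$, and I believe it can be made to work, but as written it has two genuine gaps at exactly the load-bearing points.

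First, the two-sided identification of $\mnorm{(a_iZ_i)}_p$ with $\inf\{t:\sum_{i\le p}N_i^*(pa_i/t)\le p\}+\sqrt p(\sum_{i>p}a_i^2)^{1/2}$ -- in particular the claim that at $\eta=C_2T_p$ every index $i>p$ falls into the Gaussian regime -- is asserted rather than proved, and you yourself flag it as the expected main obstacle. The missing ingredient is the normalization step the paper makes explicit: from $\sum_{i\le p}N_i^*(pa_i/t)\le p$ and $N_i^*(u)\ge u-N_i(1)=u-1$ one gets $a_{\floor{p}}\lesssim 1/p$ once the bound is scaled to $1$, hence $pa_i/\eta\lesssim 1$ for all $i>p$; without this the regime split does not close. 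Second, your claim that ``carrying the constants through yields exactly'' $C_1=\min\{\min_i\Exp\abs{Z_i}/4,\,1/(4e)\}$ and $C_2=3+2\max_i\kappa_i$ cannot survive the Lata{\l}a detour: \cref{thm:moment_bound_by_Orlicz_norm} alone costs the factors $e$ and $(e-1)/(2e^2)$, and your saddle-point and truncation estimates carry unspecified absolute constants, so this route yields the lemma only with some absolute constants, not the stated ones (in the paper the $3$ comes from the contraction-principle head bound and the $2\max_i\kappa_i$ from the exponential-domination tail bound, so the constants fall out of the direct argument). The treatment of $1\le p<2$ by comparison with $p=2$ is also only sketched; note the paper's direct argument needs no case split in $p$.
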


We apply \cref{thm:gluskin} to $\tsum_{i=1}^ n a'_i Z'_i$. First, we note that $C_1$ and $C_2$ are constants with only dependency on $\alpha$, because $\abs{Z'_i} \geq \min\{Y_i, Y_i^{2/\alpha}\}$ and $N_i(t) \geq \min\{x^2, x^\alpha\}$. Thus, \cref{thm:gluskin} implies that
\begin{equation*}
    \norm{Z^*}_p = \norm{\tsum_{i=1}^p a'_i Z'_i}_p
    \sim \inf\Big\{t > 0: \tsum_{i \leq p} N_i^*(p a_i \bar{L}_i / t) \leq p \Big\}
        + \sqrt{p} \big(\tsum_{i > p} a_i^2 \bar{L}_i^2\big)^{1/2},
\end{equation*}
as $a'_i = a_i \bar{L}_i$. Based on
\begin{equation*}
    N_i^*(p a_i \bar{L}_i/t) = \max\left\{
        \frac{p^2}{4t^2} a_i^2,
        (\alpha-1) \left(\frac{p a_i L_i}{\alpha t}\right)^\beta
    \right\},
\end{equation*}
we obtain
\begin{equation*}
\begin{aligned}
    \norm{Z^*}_p
    & \asymp \inf\Big\{t > 0: 
        \frac{p^2}{4t^2} \tsum_{i \leq p} a_i^2 
        + C(\alpha) \left(\frac{p}{t}\right)^\beta \tsum_{i \leq p} (a_i L_i)^\beta 
    \leq p \Big\} \\
    & \quad + \sqrt{p} \big(\tsum_{i > p} a_i^2 \bar{L}_i^2\big)^{1/2} \\
    & \asymp \sqrt{p} \left(\tsum_{i \leq p} a_i^2\right)^{1/2}
    + p^{1/\alpha} \left(\tsum_{i \leq p} (a_i L_i)^\beta\right)^{1/\beta} 
    + \sqrt{p} \big(\tsum_{i > p} a_i^2 \bar{L}_i^2\big)^{1/2},
\end{aligned}    
\end{equation*}
where $\beta = \frac{\alpha}{\alpha-1}$ and $\asymp$ indicates the equivalence with respect to constants depending only on $\alpha$, i.e., $x \asymp y \Leftrightarrow \frac{1}{C(\alpha)} x \leq y \leq C(\alpha) x$ for any $x, y \in \reals$. The desired result comes from the fact that 
\begin{equation*}
    \sqrt{p} \left(\tsum_{i \leq p} a_i^2\right)^{1/2}
    + p^{1/\alpha} \left(\tsum_{i \leq p} (a_i L_i)^\beta\right)^{1/\beta}
    \sim \sqrt{p} \left(\tsum_{i \leq p} a_i^2 \bar{L}_i^2 \right)^{1/2}
    + p^{1/\alpha} \left(\tsum_{i \leq p} (a_i L_i)^\beta\right)^{1/\beta},
\end{equation*}
for $\alpha \leq 2$.

\vspace{0.1in}
{\bf Cases of $\alpha \leq 1$:  } Using the relationship between $Z_i$ and $Y_i$ in \cref{thm:Z_by_Y}, we obtain \cref{thm:log_phi_bound}. Because $Y_i$ and $\abs{Y_i}^{2/\alpha}$ have log-concave and convex tails, they fall into Examples 3.2 and 3.3 of \cite{latala1997estimation}, respectively.

\begin{lemma} \label{thm:log_phi_bound}
    For $i = 1,\dots, n$ and $\eta > 0$,
    \begin{equation} \label{eq:log_phi_bound}
    \begin{aligned}
        & p \min\left\{ 1, 
        \max\left\{ \frac{\bar{L}_i^2}{\eta^2} p, 
        \frac{1}{C(\alpha)} \frac{L_i^p}{e^{2p} \eta^p} \left(\frac{p}{\alpha e}\right)^{\frac{p}{\alpha}}
        \right\}\right\} \\
        & \leq \log \phi_p\left( \frac{Z_i}{\eta} \right) 
        \leq C(\alpha) ~p~ \max\left\{ \frac{\bar{L}_i^2}{\eta^2} p, 
        \frac{e^{2p} L_i^p}{\eta^p} \left(\frac{p}{\alpha e}\right)^{\frac{p}{\alpha}}
        \right\},
    \end{aligned}\end{equation}
    where $\bar{L} = (\max\{1, L_i\})_{i=1,\dots,n}$.
\end{lemma}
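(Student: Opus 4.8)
The plan is to reduce the estimate of $\log\phi_p(Z_i/\eta)$ to the two ``pure'' tail shapes hidden inside $Z_i$ by means of \cref{thm:Z_by_Y}, and then to invoke the single-variable versions of the computations in Examples~3.2 and~3.3 of \cite{latala1997estimation}. Since $\varphi_p$ is even, I would work with $|Z_i|\overset{d}{=}\max\{|Y_i|,\,L_i|Y_i|^{2/\alpha}\}$, where $|Y_i|$ has the log-concave tail $\Pr[|Y_i|\ge t]=e^{-t^2}$ and $L_i|Y_i|^{2/\alpha}$ has tail $e^{-(t/L_i)^\alpha}$, which is log-convex because $\alpha\le 1$. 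As $\varphi_p$ is even, nondecreasing on $[0,\infty)$ and $\ge 1$, one has $\varphi_p(\max\{a,b\})=\max\{\varphi_p(a),\varphi_p(b)\}$, so that
\[
\tfrac12\bigl(\Exp\varphi_p(Y_i/\eta)+\Exp\varphi_p(L_i|Y_i|^{2/\alpha}/\eta)\bigr)\le\phi_p(Z_i/\eta)\le\Exp\varphi_p(Y_i/\eta)+\Exp\varphi_p(L_i|Y_i|^{2/\alpha}/\eta);
\]
taking logarithms (and handling the regime where both summands lie within an absolute constant of $1$ directly via the Taylor estimate below), it suffices to bound $\log\Exp\varphi_p(Y_i/\eta)$ and $\log\Exp\varphi_p(L_i|Y_i|^{2/\alpha}/\eta)$ separately, which is precisely the content of Lata{\l}a's two examples.

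To estimate either expectation I would use the identity $\Exp\varphi_p(U/\eta)-1=\int_0^\infty\varphi_p'(s)\,\Pr[|U|\ge\eta s]\,ds$ (valid for any $U$ since $\varphi_p$ is even) together with the elementary two-sided bound $\varphi_p'(s)\asymp p\,\min\{(p-1)s,\,1+s\}\,(1+s)^{p-2}$ on $[0,\infty)$, reducing the problem to Laplace-type estimates of the resulting integrals, which split naturally at $s=1$ and at the tail-transition point of $U$. Plugging in $\Pr[|Y_i|\ge\eta s]=e^{-\eta^2 s^2}$ gives $\log\Exp\varphi_p(Y_i/\eta)$, and plugging in $e^{-(\eta s/L_i)^\alpha}$ gives $\log\Exp\varphi_p(L_i|Y_i|^{2/\alpha}/\eta)$. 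In each case two regimes appear: for small $\eta$ the integral is driven by the tail, i.e.\ by the $p$-th moments $\Exp|Y_i|^p=\Gamma(\tfrac p2+1)\asymp(\tfrac{p}{2e})^{p/2}\sqrt p$ and $\Exp(L_i|Y_i|^{2/\alpha})^p=L_i^p\Gamma(\tfrac p\alpha+1)\asymp L_i^p(\tfrac{p}{\alpha e})^{p/\alpha}\sqrt{p/\alpha}$ (Stirling), which produces the term $\eta^{-p}L_i^p(\tfrac{p}{\alpha e})^{p/\alpha}$ in \cref{eq:log_phi_bound}; for large $\eta$ the integral is driven by its small-$s$ part, i.e.\ by the second moments $\Exp Y_i^2=1$ and $\Exp(L_i|Y_i|^{2/\alpha})^2=L_i^2\Gamma(\tfrac2\alpha+1)\asymp L_i^2$, hence by $\Exp Z_i^2\asymp\bar{L}_i^2$, which produces the term $\bar{L}_i^2 p/\eta^2$.

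For the lower bound I would additionally use the one-point inequality $\phi_p(Z_i/\eta)\ge\varphi_p(s)\,\Pr[|Z_i|\ge\eta s]$ with $\log\varphi_p(s)\ge\max\{ps/2,\,p\log(1+s)\}-\log 2$ and $\Pr[|Z_i|\ge\eta s]\ge e^{-\min\{\eta^2 s^2,\,(\eta s/L_i)^\alpha\}}$, and optimize over $s>0$; combined with the Taylor lower bound $\varphi_p(x)\ge 1+\tfrac14 p(p-1)x^2$ (from $\varphi_p''\ge\tfrac12 p(p-1)$), this recovers both terms, and the truncation ``$\min\{1,\cdot\}$'' in \cref{eq:log_phi_bound} simply reflects that each of the two components can contribute at most order $p$ to $\log\phi_p$.

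The main obstacle is the bookkeeping: one must interpolate between the ``small $\eta$'' and ``large $\eta$'' regimes uniformly in $\eta>0$ and keep track of which of $\bar{L}_i^2 p/\eta^2$ and $\eta^{-p}L_i^p(\tfrac{p}{\alpha e})^{p/\alpha}$ dominates. What makes this manageable is the very generous slack in \cref{eq:log_phi_bound} --- a multiplicative factor $e^{\pm 2p}$ and an $\alpha$-dependent constant --- which absorbs all the polynomial-in-$p$ factors from Stirling and from the Laplace estimates, the $2^p$ coming from $\varphi_p(x)\le 2^p\max\{1,|x|^p\}$, the factor $\tfrac12$ from the $\max$-splitting, and the dependence on small $\alpha$ and on bounded ranges of $p$; after this the displayed inequalities follow from crude comparisons such as $\log u\le u\le u^p$ for $u\ge1$. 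A secondary technical point is the ``large $\eta$'' upper bound for $\Exp\varphi_p(L_i|Y_i|^{2/\alpha}/\eta)$: one cannot use a Gaussian-type moment-generating bound there because $L_i|Y_i|^{2/\alpha}$ has no finite exponential-of-square moment, so one truncates at $|Y_i|^{2/\alpha}\le\eta/L_i$, bounds the bulk by $\cosh(p\,\cdot)\le e^{p^2(\cdot)^2/2}$, and bounds the overshoot by the $p$-th moment estimate above.
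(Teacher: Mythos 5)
Your proposal is correct and follows essentially the same route as the paper: represent $\abs{Z_i}\overset{d}{=}\max\{\abs{Y_i},\,L_i\abs{Y_i}^{2/\alpha}\}$ via \cref{thm:Z_by_Y}, sandwich $\phi_p(Z_i/\eta)$ between the corresponding quantities for the two pure pieces (max from below, sum from above), and then invoke the single-variable estimates of Examples 3.2 and 3.3 of \cite{latala1997estimation} together with Stirling's formula for $\norm{Y_i^{2/\alpha}}_p$, which is exactly what the paper does. One caveat on your self-contained lower-bound sketch: the one-point inequality plus the quadratic Taylor bound cannot by themselves produce the term $\eta^{-p}L_i^p(p/(\alpha e))^{p/\alpha}$ in the regime where it dominates yet lies below the order-one cap (a one-point bound carries an additive $-\log 2$ and so is useless once the target is $\ll 1$, and the quadratic term only gives the $\bar{L}_i^2p^2/\eta^2$ contribution); there one needs a $p$-th-moment lower estimate such as $\varphi_p(x)\geq 1+(\abs{x}/2)^p$ followed by $\log(1+u)\geq u/2$ for $u\leq 1$, which is precisely what the lower bound of Example 3.3 of \cite{latala1997estimation}, quoted in the paper's proof, supplies.
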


For each $i \in [n]$, we replace $\eta$ in the above lemma with $\eta/a_i$ to obtain
\begin{equation*}\begin{aligned}
    & p \min\left\{ 1, 
    \frac{1}{C(\alpha)} \max\left\{ \frac{(a_i \bar{L}_i)^2}{\eta^2} p, 
    \frac{(a_i L_i)^p}{e^{2p} \eta^p} \left(\frac{p}{\alpha e}\right)^{\frac{p}{\alpha}}
    \right\}\right\} \\
    & \leq \log \phi_p\left( \frac{a_i Z_i}{\eta} \right) 
    \leq C(\alpha) ~p~ \max\left\{ \frac{(a_i \bar{L}_i)^2}{\eta^2} p, 
    \frac{e^{2p} (a_i L_i)^p}{\eta^p} \left(\frac{p}{\alpha e}\right)^{\frac{p}{\alpha}}
    \right\}.
\end{aligned}\end{equation*}
By the summation across $i \in [n]$, we bound $\sum_{i=1}^n \log \phi_p\left( \frac{a_i Z_i}{\eta} \right)$: for $\eta > 0$,
\begin{equation*}\begin{aligned}
    & p \sum_{i=1}^n \min\left\{ 1, 
    \frac{1}{C(\alpha)} \max\left\{ \frac{(a_i \bar{L}_i)^2}{\eta^2} p, 
    \frac{(a_i L_i)^p}{e^{2p} \eta^p} \left(\frac{p}{\alpha e}\right)^{\frac{p}{\alpha}}
    \right\}\right\} \\
    & \leq \sum_{i=1}^n \log \phi_p\left( \frac{a_i Z_i}{\eta} \right) 
    \leq C(\alpha) ~p~ \sum_{i=1}^n \max\left\{ \frac{(a_i \bar{L}_i)^2}{\eta^2} p, 
    \frac{e^{2p} (a_i L_i)^p}{\eta^p} \left(\frac{p}{\alpha e}\right)^{\frac{p}{\alpha}}
    \right\}.
\end{aligned}\end{equation*}
For the upperbound,
\begin{equation*}
\begin{aligned}
    & C(\alpha) ~p~ \sum_{i=1}^n \max\left\{ \frac{(a_i \bar{L}_i)^2}{\eta^2} p, 
    \frac{e^{2p} (a_i L_i)^p}{\eta^p} \left(\frac{p}{\alpha e}\right)^{\frac{p}{\alpha}}
    \right\} \\
    & \leq C(\alpha) ~p~ \sum_{i=1}^n \left( \frac{(a_i \bar{L}_i)^2}{\eta^2} p +  
    \frac{e^{2p} (a_i L_i)^p}{\eta^p} \left(\frac{p}{\alpha e}\right)^{\frac{p}{\alpha}}
    \right) \\
    & \leq 2 C(\alpha) ~p~ \max\left\{ \sum_{i=1}^n \frac{(a_i \bar{L}_i)^2}{\eta^2} p, 
    \sum_{i=1}^n \frac{e^{2p} (a_i L_i)^p}{\eta^p} \left(\frac{p}{\alpha e}\right)^{\frac{p}{\alpha}}
    \right\} \\
    & = 2 C(\alpha) ~p~ \max\left\{ 
        \frac{\norm{a \odot \bar{L}}_2^2}{\eta^2} p, 
        \frac{e^{2p} \norm{a \odot L}_p^p}{\eta^p} \left(\frac{p}{\alpha e}\right)^{\frac{p}{\alpha}}
    \right\}.
\end{aligned}
\end{equation*}
For the lowerbound, if $\frac{1}{C(\alpha)} \max\left\{ \frac{(a_i \bar{L}_i)^2}{\eta^2} p, 
\frac{(a_i L_i)^p}{e^{2p} \eta^p} \left(\frac{p}{\alpha e}\right)^{\frac{p}{\alpha}}
\right\} \leq 1$ for all $i \in [n]$, then
\begin{equation*}\begin{aligned}
    & p \sum_{i=1}^n \min\left\{ 1, 
    \frac{1}{C(\alpha)} \max\left\{ \frac{(a_i \bar{L}_i)^2}{\eta^2} p, 
    \frac{(a_i L_i)^p}{e^{2p} \eta^p} \left(\frac{p}{\alpha e}\right)^{\frac{p}{\alpha}}
    \right\}\right\} \\
    & = p \sum_{i=1}^n 
    \frac{1}{C(\alpha)} \max\left\{ \frac{(a_i \bar{L}_i)^2}{\eta^2} p, 
    \frac{(a_i L_i)^p}{e^{2p} \eta^p} \left(\frac{p}{\alpha e}\right)^{\frac{p}{\alpha}}
    \right\} \\
    & \geq \frac{p}{C(\alpha)}  
    \max\left\{ \sum_{i=1}^n \frac{(a_i \bar{L}_i)^2}{\eta^2} p, 
    \sum_{i=1}^n \frac{(a_i L_i)^p}{e^{2p} \eta^p} \left(\frac{p}{\alpha e}\right)^{\frac{p}{\alpha}}
    \right\} \\
    & = \frac{p}{C(\alpha)} \max\left\{ 
        \frac{\norm{a \odot \bar{L}}_2^2}{\eta^2} p, 
        \frac{e^{2p} \norm{a \odot L}_p^p}{\eta^p} \left(\frac{p}{\alpha e}\right)^{\frac{p}{\alpha}}
    \right\}.
\end{aligned}\end{equation*}
On the other hand, without loss of generality, if $\frac{1}{C(\alpha)} \max\left\{ 
\frac{(a_1 \bar{L}_1)^2}{\eta^2} p, 
\frac{(a_1 L_1)^p}{e^{2p} \eta^p} \left(\frac{p}{\alpha e}\right)^{\frac{p}{\alpha}}
\right\} > 1$, then
\begin{equation*}\begin{aligned}
    & p \sum_{i=1}^n \min\left\{ 1, 
    \frac{1}{C(\alpha)} \max\left\{ \frac{(a_i \bar{L}_i)^2}{\eta^2} p, 
    \frac{(a_i L_i)^p}{e^{2p} \eta^p} \left(\frac{p}{\alpha e}\right)^{\frac{p}{\alpha}}
    \right\}\right\} \\
    & \geq p \min\left\{ 1, 
    \frac{1}{C(\alpha)} \max\left\{ 
        \frac{(a_1 \bar{L}_1)^2}{\eta^2} p, 
        \frac{(a_1 L_1)^p}{e^{2p} \eta^p} \left(\frac{p}{\alpha e}\right)^{\frac{p}{\alpha}}
    \right\}\right\} \\
    & = p.
\end{aligned}\end{equation*}
In sum,
\begin{equation}\begin{aligned} \label{eq:log_phi_sum_bound}
    & p \min\left\{ 1, \frac{1}{C(\alpha)} \max\left\{ 
        \frac{\norm{a \odot \bar{L}}_2^2}{\eta^2} p, 
        \frac{\norm{a \odot L}_p^p}{e^{2p} \eta^p} \left(\frac{p}{\alpha e}\right)^{\frac{p}{\alpha}}
        \right\}\right\} \\
    & \leq \sum_{i=1}^n \log \phi_p\left(\frac{a_i Z_i}{\eta}\right)
    \leq C(\alpha) ~p ~\max\left\{ 
        \frac{\norm{a \odot \bar{L}}_2^2}{\eta^2} p, 
        \frac{e^{2p} \norm{a \odot L}_p^p}{\eta^p} \left(\frac{p}{\alpha e}\right)^{\frac{p}{\alpha}}
        \right\},
\end{aligned}\end{equation}
where $a \odot L = (a_i L_i)_{i=1,\dots,n}$. 
    
Now we give a tight bound for $\mnorm{(a_iZ_i)}_p$ according to its definition in \cref{eq:sequence_Orlicz_norm}. 
For the upperbound, suppose that $\eta_0 = C_1(\alpha) \max\left\{\sqrt{p} \norm{a \odot \bar{L}}_2, p^{1/\alpha} \norm{a \odot L}_p \right\}$. By the upperbound in \cref{eq:log_phi_sum_bound},
\begin{equation*}\begin{aligned} 
    & \sum_{i=1}^n \log \phi_p\left(\frac{a_i Z_i}{\eta_0}\right)
    \leq C(\alpha) ~p ~\max\left\{ 
        \frac{\norm{a \odot \bar{L}}_2^2}{\eta_0^2} p, 
        \frac{e^{2p} \norm{a \odot L}_p^p}{\eta_0^p} \left(\frac{p}{\alpha e}\right)^{\frac{p}{\alpha}}
        \right\} \\
    & \leq C(\alpha) ~p ~\max\left\{ 
        \frac{\norm{a \odot \bar{L}}_2^2}{(C_1(\alpha) \sqrt{p} \norm{a \odot \bar{L}}_2)^2} p, 
        \frac{e^{2p} \norm{a \odot L}_p^p}{(C_1(\alpha) p^{1/\alpha} \norm{a \odot L}_p)^p} \left(\frac{p}{\alpha e}\right)^{\frac{p}{\alpha}}
        \right\} \\
    & \leq C(\alpha) ~p ~\max\left\{ \frac{1}{C_1(\alpha)^2}, \frac{e^{2p}}{C_1(\alpha)^p} \left(\frac{1}{\alpha e}\right)^{\frac{p}{\alpha}} \right\}.
\end{aligned}\end{equation*}
Thus, for $C_1(\alpha) \geq \sqrt{2\max\{C(\alpha),1\}} \max\left\{1, \frac{e^2}{(\alpha e)^{1/\alpha}} \right\}$, $\sum_{i=1}^n \log \phi_p\left(\frac{a_i Z_i}{\eta_0}\right) \leq \frac{p}{2}$, and 
$$\mnorm{(X_i)}_p \leq \eta_0 = C_1(\alpha) \max\left\{\sqrt{p} \norm{a \odot \bar{L}}_2, p^{1/\alpha} \norm{a \odot L}_p \right\}.$$
For the lowerbound, suppose that $\eta_0 = \frac{1}{C_2(\alpha)} \max\left\{\sqrt{p} \norm{a \odot \bar{L}}_2, p^{1/\alpha} \norm{a \odot L}_p \right\}$. If $\sqrt{p} \norm{a \odot \bar{L}}_2 \leq p^{1/\alpha} \norm{a \odot L}_p$, by the lowerbound in \cref{eq:log_phi_sum_bound},
\begin{equation*}\begin{aligned} 
    & \sum_{i=1}^n \log \phi_p\left(\frac{a_i Z_i}{\eta_0}\right)
    \geq p \min\left\{ 1, \frac{1}{C(\alpha)} \max\left\{ 
        \frac{\norm{a \odot \bar{L}}_2^2}{\eta_0^2} p, 
        \frac{\norm{a \odot L}_p^p}{e^{2p} \eta_0^p} \left(\frac{p}{\alpha e}\right)^{\frac{p}{\alpha}}
        \right\}\right\} \\
    & \geq p \min\left\{ 1, \frac{1}{C(\alpha)} 
        \frac{C_2(\alpha)^p \norm{a \odot L}_p^p}{e^{2p} (p^{1/\alpha} \norm{a \odot L}_p)^p} \left(\frac{p}{\alpha e}\right)^{\frac{p}{\alpha}}
        \right\}
    \geq p \min\left\{ 1, \frac{C_2(\alpha)^p}{C(\alpha)} 
        \left(\frac{1}{\alpha e^{2\alpha+1}}\right)^{\frac{p}{\alpha}}
        \right\}.
\end{aligned}\end{equation*}
If $\sqrt{p} \norm{a \odot \bar{L}}_2 \geq p^{1/\alpha} \norm{a \odot L}_p$, 
\begin{equation*}\begin{aligned} 
    & \sum_{i=1}^n \log \phi_p\left(\frac{a_i Z_i}{\eta_0}\right)
    \geq p \min\left\{ 1, \frac{1}{C(\alpha)} \max\left\{ 
        \frac{\norm{a \odot \bar{L}}_2^2}{\eta_0^2} p, 
        \frac{\norm{a \odot L}_p^p}{e^{2p} \eta_0^p} \left(\frac{p}{\alpha e}\right)^{\frac{p}{\alpha}}
        \right\}\right\} \\
    & \geq p \min\left\{ 1, \frac{1}{C(\alpha)}
        \frac{C_2(\alpha)^2 \norm{a \odot \bar{L}}_2^2}{(\sqrt{p} \norm{a \odot \bar{L}}_2)^2} p
        \right\}
    \geq p \min\left\{ 1, \frac{C_2(\alpha)^2}{C(\alpha)} 
        \right\}.
\end{aligned}\end{equation*}
In sum,
\begin{equation*}\begin{aligned} 
    & \sum_{i=1}^n \log \phi_p\left(\frac{a_i Z_i}{\eta}\right)
    \geq p \min\left\{ 1, \frac{C_2(\alpha)^2}{C(\alpha)}, \frac{C_2(\alpha)^p}{C(\alpha)} 
        \left(\frac{1}{\alpha e^{2\alpha+1}}\right)^{\frac{p}{\alpha}}
        \right\} \geq p,
\end{aligned}\end{equation*}
for $C_2(\alpha) \geq \sqrt{\max\{C(\alpha),1\}} (\alpha e^{2\alpha+1})^{1/\alpha}$. Because $\sum_{i=1}^n \log \phi_p\left(\frac{a_i Z_i}{\eta}\right)$ strictly decreases for $\eta \in (0, \infty)$,
$$\mnorm{(a_iZ_i)}_p \geq \eta_0 = \frac{1}{C_2(\alpha)} \max\left\{\sqrt{p} \norm{a \odot \bar{L}}_2, p^{1/\alpha} \norm{a \odot L}_p \right\}.$$
The resulting bound for $\mnorm{(X_i)}_p$ is 
\begin{equation*}\begin{aligned}
    & \frac{1}{C(\alpha)} \max\left\{\sqrt{p} \norm{a \odot \bar{L}}_2,
    p^{1/\alpha} \norm{a \odot L}_p \right\} \\
    & \leq \mnorm{(a_i Z_i)}_p 
    \leq C(\alpha) \max\left\{\sqrt{p} \norm{a \odot \bar{L}}_2,
    p^{1/\alpha} \norm{a \odot L}_p \right\}.
\end{aligned}\end{equation*}
Then, \cref{thm:moment_bound_by_Orlicz_norm} derives 
\begin{equation*}\begin{aligned}
    & \frac{1}{C(\alpha)} \max\left\{\sqrt{p} \norm{a \odot \bar{L}}_2,
    p^{1/\alpha} \norm{a \odot L}_p \right\} \\
    & \leq \norm{Z^*}_p 
    \leq C(\alpha) \max\left\{\sqrt{p} \norm{a \odot \bar{L}}_2,
    p^{1/\alpha} \norm{a \odot L}_p \right\},
\end{aligned}\end{equation*}
for $p \geq 2$, which implies the lowerbound in \cref{eq:moment_lower}.
We bound $\norm{a \odot L}_p$ in terms of $\norm{a \odot L}_2$ and $\norm{a \odot L}_\infty$ following the argument in Corollary 1.2 in \cite{bogucki2015suprema}. In the proof, \cite{bogucki2015suprema} showed that for $p \geq 2$,
\begin{equation*}
    \norm{a \odot L}_\infty \leq \norm{a \odot L}_p \leq 
    \frac{e^{(2-\alpha)/\alpha}}{p^{1/\alpha}} (\sqrt{p} \norm{a \odot L}_2 + p^{1/\alpha} \norm{a \odot L}_\infty).
\end{equation*}
Plugging it in \cref{eq:moment_lower,eq:moment_upper}, we obtain the desired results for $p \geq 2$.
Now the remaining cases are for $1 \leq p < 2$. For the upper bound,
\begin{equation*}\begin{aligned}
    \norm{Z^*}_p 
    & \leq \norm{Z^*}_2
    \leq C(\alpha) \max\left\{\sqrt{2} \norm{a \odot \bar{L}}_2,
    2^{1/\alpha} \norm{a \odot L}_\infty \right\} \\
    & \leq C(\alpha) \max\left\{\sqrt{p} \norm{a \odot \bar{L}}_2,
    p^{1/\alpha} \norm{a \odot L}_\infty \right\}.
\end{aligned}\end{equation*}
On the other hand, because 
\begin{equation*}
\begin{aligned}
    & \norm{Z^*}_2 
    \geq \frac{1}{C(\alpha)} \max\left\{\sqrt{2} \norm{a \odot \bar{L}}_2,
    2^{1/\alpha} \norm{a \odot L}_\infty \right\}  \\
    & \norm{Z^*}_4
    \leq C(\alpha) \max\left\{2 \norm{a \odot \bar{L}}_2,
    4^{1/\alpha} \norm{a \odot L}_\infty \right\},
\end{aligned}
\end{equation*}
by the Paley-Zygmund inequality,
\begin{equation*}\begin{aligned}
    \Pr\left[\abs{Z^*} \geq \frac{1}{2} \norm{Z^*}_2\right]
    & \geq \left(1 - \frac{1}{4}\frac{\norm{Z^*}_2^2}{\norm{Z^*}_2^2}\right)^2 \frac{\norm{Z^*}_2^4}{\norm{Z^*}_{4}^{4}} \\
    & \geq \frac{3}{4} \frac{1}{C(\alpha)} \left(\frac{\max\left\{
        \sqrt{2} \norm{a \odot \bar{L}}_2,
        2^{1/\alpha} \norm{a \odot L}_\infty 
    \right\}}
    {\max\left\{
        2 \norm{a \odot \bar{L}}_2,
        4^{1/\alpha} \norm{a \odot L}_\infty 
    \right\}}\right)^4 \\
    & \geq \frac{1}{C(\alpha)} \left(\max\left\{
        \frac{\sqrt{2} \norm{a \odot \bar{L}}_2}{2 \norm{a \odot \bar{L}}_2},
        \frac{2^{1/\alpha} \norm{a \odot L}_\infty}{4^{1/\alpha} \norm{a \odot L}_\infty}
    \right\}\right)^4 \\
    & \geq \frac{1}{C(\alpha)}.
\end{aligned}\end{equation*}
Hence, for $1 \leq p < 2$, by the Markov inequality,
\begin{equation*}
\begin{aligned}
    \norm{Z^*}_p & \geq \norm{Z^*}_1 \geq \frac{1}{2} \norm{Z^*}_2 \cdot \Pr\left[ \abs{Z^*} \geq \frac{1}{2}\norm{Z^*}_2 \right] \\
    & \geq \frac{1}{2 C(\alpha)} \norm{Z^*}_2 \\
    & \geq \frac{1}{C(\alpha)} \max\left\{\norm{a \odot \bar{L}}_2, \norm{a \odot L}_\infty \right\} \\
    & \geq \frac{1}{C(\alpha)} \max\left\{\sqrt{p} \norm{a \odot \bar{L}}_2, p^{1/\alpha} \norm{a \odot L}_\infty \right\}.
\end{aligned}
\end{equation*}

\end{proof}

\begin{proof}[Proof of Theorem~\ref{thm:GBO_bound}]

For the upperbound, we use the moment bound in \cref{thm:moment_bound} using Proposition C.1 in \cite{kuchibhotla2018moving}:
\begin{lemma}[Proposition C.1, \cite{kuchibhotla2018moving}] \label{thm:moment_bound_to_GBO_bound}
    If $\norm{X}_p \leq C_1 \sqrt{p} + C_2 p^{1/\alpha}$, holds for $p \geq 1$ and some constants $C_1$ and $C_2$, then $\norm{X}_{\phi_{\alpha, L}} \leq 2 e C_1$ where $L := 4^{1/\alpha} \frac{C_2}{2 C_1}$.
\end{lemma}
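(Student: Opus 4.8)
The plan is to verify the defining inequality of the GBO norm by a direct moment computation, following the proof of Proposition~C.1 in \cite{kuchibhotla2018moving}. Rescaling $X\mapsto X/C_1$, we may assume $C_1=1$, so the hypothesis reads $\norm{X}_p\le\sqrt p+C_2\,p^{1/\alpha}$ for all $p\ge1$ and the goal becomes $\norm{X}_{\phi_{\alpha,L}}\le 2e$ with $L=4^{1/\alpha}C_2/2$. Since $\phi_{\alpha,L}(x)=\exp(\min\{x^2,(x/L)^\alpha\})-1$, taking $\eta:=2e$ (so that $L\eta=4^{1/\alpha}eC_2$) it suffices to prove $\Exp\big[\exp\big(\min\{(\abs{X}/\eta)^2,(\abs{X}/(L\eta))^\alpha\}\big)\big]\le2$. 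Writing $A_k:=(\abs{X}/\eta)^{2k}$ and $B_k:=(\abs{X}/(L\eta))^{\alpha k}$, expanding the exponential in a power series and using $\min\{a,b\}^k=\min\{a^k,b^k\}$ turns this into
\[
  S\ :=\ \sum_{k\ge1}\frac{1}{k!}\,\Exp\big[\min\{A_k,B_k\}\big]\ \le\ 1 ;
\]
for the finitely many $k$ with $\alpha k<1$, outside the range of the hypothesis, simply replace $\min\{A_k,B_k\}$ by $A_k$.

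Let $p^*:=C_2^{-2\alpha/(2-\alpha)}$ be the cross-over order (where $\sqrt p=C_2p^{1/\alpha}$); assume $\alpha<2$, since for $\alpha\ge2$ one has $p^{1/\alpha}\le\sqrt p$ for $p\ge1$, so $\norm{X}_p\le 2\sqrt p$ throughout and the claim reduces to the classical sub-Gaussian estimate. Let $t^*:=\eta\,L^{-\alpha/(2-\alpha)}$ be the point where the two branches of $\phi_{\alpha,L}$ coincide; a direct computation with the chosen $\eta,L$ gives the clean identities $t^*/\eta=\sqrt{p^*}/2$ and $\norm{X}_{p^*}/\eta\le\sqrt{p^*}/e$, the latter from $\norm{X}_{p^*}\le 2\sqrt{p^*}=2C_2(p^*)^{1/\alpha}$. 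Split $\Exp[\min\{A_k,B_k\}]=\Exp[A_k\mathbf{1}_{\{\abs{X}\le t^*\}}]+\Exp[B_k\mathbf{1}_{\{\abs{X}>t^*\}}]$ and bound each summand according to the position of $k$: for $k\le p^*/2$ drop both indicators and use $\norm{X}_{2k}\le 2\sqrt{2k}$, so the summand is $\le\frac{1}{k!}(\sqrt{2k}/e)^{2k}$; for $k\ge p^*/\alpha$ use $\norm{X}_{\alpha k}\le 2C_2(\alpha k)^{1/\alpha}$; and for the transition band $p^*/2<k<p^*/\alpha$ exploit the indicators to reduce the moment order to $p^*$, e.g.\ $\abs{X}^{2k}\mathbf{1}_{\{\abs{X}\le t^*\}}\le(t^*)^{2k-p^*}\abs{X}^{p^*}$, whence $\Exp[A_k\mathbf{1}_{\{\abs{X}\le t^*\}}]\le(t^*/\eta)^{2k-p^*}(\norm{X}_{p^*}/\eta)^{p^*}\le(\sqrt{p^*}/2)^{2k-p^*}(\sqrt{p^*}/e)^{p^*}=(p^*/4)^k(2/e)^{p^*}$, and the symmetric estimate for $\Exp[B_k\mathbf{1}_{\{\abs{X}>t^*\}}]$ gives the same bound. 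In each case Stirling's inequality $k!\ge\sqrt{2\pi k}(k/e)^k$ turns the summand into $(2\pi k)^{-1/2}$ times the $k$-th power of an explicit base which the choices $\eta=2e$, $L\eta=4^{1/\alpha}eC_2$ make $<1$ in every block: in the first block $(\sqrt{2k}/e)^{2k}/(k/e)^k=(2/e)^k$; in the last block the base is $\alpha\,2^{\alpha-2}e^{1-\alpha}\le 2/e$ for $0<\alpha\le2$; and in the transition band one uses $p^*/(2k)<1$ together with the elementary inequality $\sup_{x\ge1/2}\big[(\log 2-1)+x(1-2\log 2-\log x)\big]<0$ to control $(p^*/4)^k(2/e)^{p^*}/k!$. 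Summing the resulting geometric (and, in the transition band, rapidly decaying) series over $k\ge1$ yields $S\le1$, which proves the lemma. An alternative route replaces the power series by the identity $\Exp[\phi_{\alpha,L}(\abs{X}/\eta)]=\int_0^\infty\Pr[\abs{X}>\eta\max\{\sqrt v,Lv^{1/\alpha}\}]\,e^v\,dv$ (substitution $v=\log(1+s)$) combined with the optimized Markov bound $\Pr[\abs{X}>t]\le\inf_{p\ge1}(\norm{X}_p/t)^p$.

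The delicate point, and the main obstacle, is the transition band of exponents, which is genuinely nonempty because $\alpha<2$: there the $2k$-th moment is already governed by the sub-Weibull term of the hypothesis while the $\alpha k$-th moment is still sub-Gaussian, so neither $A_k$ nor $B_k$ can be used directly; the resolution is the indicator-splitting trick, which pins the effective moment order at the cross-over $p^*$, after which one must check that $(p^*/4)^k(2/e)^{p^*}/k!$ is summable uniformly in the location of the band — this is exactly where the clean identities $t^*/\eta=\sqrt{p^*}/2$, $\norm{X}_{p^*}/\eta\le\sqrt{p^*}/e$ and the calculus inequality above are needed. The remaining work is bookkeeping: checking that the accumulated multiplicative losses (the factor $2$ in $\norm{X}_p\le 2\max\{\sqrt p,C_2p^{1/\alpha}\}$, Stirling, the geometric sums) stay within the stated constant $2e$; replacing $2e$ by a larger absolute constant makes all these estimates comfortable.
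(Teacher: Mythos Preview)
The paper does not prove this lemma: it is quoted verbatim as Proposition~C.1 of \cite{kuchibhotla2018moving} and used as a black box inside the proof of Theorem~\ref{thm:GBO_bound}. There is therefore no proof in the present paper to compare your attempt against.

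Your approach is the standard one and matches in spirit the argument in the cited source: expand $\phi_{\alpha,L}$ as a power series, split the index $k$ according to whether $2k$ and $\alpha k$ fall below or above the cross-over $p^*$, apply the moment hypothesis in the appropriate regime, and close with Stirling. The indicator-splitting device for the transition band, together with the normalizations $t^*/\eta=\sqrt{p^*}/2$ and $\|X\|_{p^*}/\eta\le\sqrt{p^*}/e$, is exactly the right idea, and the calculus inequality you quote is correct. At the level of a sketch this is sound and gives the result up to an absolute constant.

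Where the argument is not yet closed is the bookkeeping needed to hit the exact constant $2e$. Summing your first- and third-block bounds over all $k\ge1$ already contributes $\sum_{k\ge1}(2/e)^k/\sqrt{2\pi k}\approx0.72$, and the transition-band estimate $2(p^*/4)^k(2/e)^{p^*}/k!$, summed over $k>p^*/2$ (e.g.\ via a Poisson-tail bound), adds a further $2(2/e)^{p^*/2}$, which is not uniformly below $0.28$ for small $p^*$; so $S\le1$ is not established as written. You flag this yourself (``replacing $2e$ by a larger absolute constant makes all these estimates comfortable''), but the lemma claims $2e$. There is also the edge case $p^*<1$ (equivalently $C_2>1$ when $\alpha<2$): then the first block is empty and the hypothesis is unavailable at $p=p^*$, so neither the indicator-split nor the fallback ``replace $\min\{A_k,B_k\}$ by $A_k$'' applies directly, since the bound $\|X\|_{2k}\le2\sqrt{2k}$ requires $2k\le p^*$. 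These are bookkeeping issues rather than conceptual ones, but they are not yet discharged.
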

Based on \cref{thm:moment_bound},
\begin{equation*}
    \norm{X^*}_{\phi_{\alpha, L^*}} \leq C(\alpha) \norm{a \odot \bar{L}}_2 
\end{equation*}
where $L^* = C(\alpha) \norm{a \odot L}_\beta/\norm{a \odot \bar{L}}_2$.
For the lowerbound, we use Proposition A.4 in \cite{kuchibhotla2018moving}:
\begin{lemma}[Proposition A.4, \cite{kuchibhotla2018moving}]
    For any random variable $X$,
    \begin{equation*}
        \frac{1}{C(\alpha)} \sup_{p \geq 1} \frac{\norm{X}_p}{\sqrt{p} + L p^{1/\alpha}} 
        \leq \norm{X}_{\phi_{\alpha, L}}
        \leq C(\alpha) \sup_{p \geq 1} \frac{\norm{X}_p}{\sqrt{p} + L p^{1/\alpha}}.
    \end{equation*}
\end{lemma}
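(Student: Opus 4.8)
The plan is to prove both inequalities by passing, in each direction, through the equivalent description of the GBO norm as a tail–decay rate. Throughout set $\Psi_{\alpha,L}(x) := \min\{x^2,(x/L)^{\alpha}\}$, so that $\phi_{\alpha,L}(x) = \exp(\Psi_{\alpha,L}(x))-1$ and $\Psi_{\alpha,L}^{-1}(u) = \max\{\sqrt{u},\,Lu^{1/\alpha}\}$, which is comparable to $\sqrt{u}+Lu^{1/\alpha}$ up to a factor of $2$; write $M := \sup_{p\ge 1}\norm{X}_p/(\sqrt{p}+Lp^{1/\alpha})$, so the target is $M\asymp_\alpha\norm{X}_{\phi_{\alpha,L}}$. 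The one elementary fact I would isolate at the outset is the scaling inequality $\Psi_{\alpha,L}(x/K)\le K^{-\theta}\Psi_{\alpha,L}(x)$, valid for $K\ge 1$ with $\theta := \min\{2,\alpha\}$ (immediate since $K^{-2},K^{-\alpha}\le K^{-\theta}$); this is the device that neutralizes the non-convexity of $\phi_{\alpha,L}$.

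\emph{GBO norm $\Rightarrow$ moments.} For the bound $M\le C(\alpha)\norm{X}_{\phi_{\alpha,L}}$ I would start from the tail estimate implied by a finite Orlicz norm (the one recalled in the introduction): with $\eta := \norm{X}_{\phi_{\alpha,L}}$, one has $\Pr[\abs{X}\ge t]\le 2\exp(-\Psi_{\alpha,L}(t/\eta))$ for every $t\ge 0$. Writing $\norm{X}_p^p = p\int_0^\infty t^{p-1}\Pr[\abs{X}\ge t]\,dt$ and using $\exp(-\Psi_{\alpha,L}(t/\eta)) = \max\{e^{-(t/\eta)^2},e^{-(t/(L\eta))^{\alpha}}\}\le e^{-(t/\eta)^2}+e^{-(t/(L\eta))^{\alpha}}$, the $p$-th moment splits into the two Gamma integrals $p\eta^{p}\Gamma(p/2)$ and $\frac{2p}{\alpha}(L\eta)^{p}\Gamma(p/\alpha)$; Stirling's estimate together with subadditivity of $x\mapsto x^{1/p}$ then gives $\norm{X}_p\le C(\alpha)\,\eta\,(\sqrt{p}+Lp^{1/\alpha})$ for all $p\ge 1$, which is exactly $M\le C(\alpha)\norm{X}_{\phi_{\alpha,L}}$.

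\emph{Moments $\Rightarrow$ GBO norm.} For the reverse bound $\norm{X}_{\phi_{\alpha,L}}\le C(\alpha)M$ I would first turn moment control into a tail bound by optimizing Markov's inequality over $p$: for fixed $t$ take $p := \Psi_{\alpha,L}(t/(cM))$ (replaced by $1$ whenever this falls below $1$) so that $\Psi_{\alpha,L}^{-1}(p) = t/(cM)$ and $\norm{X}_p\le M(\sqrt{p}+Lp^{1/\alpha})\le 2M\Psi_{\alpha,L}^{-1}(p) = 2t/c$, hence $\Pr[\abs{X}\ge t]\le (\norm{X}_p/t)^{p}\le (2/c)^{\Psi_{\alpha,L}(t/(cM))}\le 2\exp(-\Psi_{\alpha,L}(t/(cM)))$ as soon as $c=c(\alpha)$ is large; the small–$t$ range $t\le C(\alpha)M\max\{1,L\}$, where the natural $p$ would be below $1$, is covered because there the right-hand side already exceeds $1$. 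I would then feed this tail bound back into the Orlicz functional: setting $\eta := 3^{1/\theta}cM$, the scaling inequality gives $\Psi_{\alpha,L}(\abs{X}/\eta)\le \frac{1}{3}\Psi_{\alpha,L}(\abs{X}/(cM))$, and since $Y := \Psi_{\alpha,L}(\abs{X}/(cM))$ obeys $\Pr[Y\ge u] = \Pr[\abs{X}\ge cM\,\Psi_{\alpha,L}^{-1}(u)]\le 2e^{-u}$, the layer–cake formula yields $\Exp[e^{Y/3}]\le 1+\int_1^\infty\Pr[Y>3\log s]\,ds\le 1+2\int_1^\infty s^{-3}\,ds = 2$; therefore $\Exp[\phi_{\alpha,L}(\abs{X}/\eta)] = \Exp[e^{\Psi_{\alpha,L}(\abs{X}/\eta)}]-1\le \Exp[e^{Y/3}]-1\le 1$, i.e.\ $\norm{X}_{\phi_{\alpha,L}}\le\eta = C(\alpha)M$. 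Combining the two directions finishes the proof.

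\emph{Main obstacle.} The real difficulty is conceptual rather than computational: $\phi_{\alpha,L}$ fails to be convex when $\alpha<1$, so the textbook (convex) Orlicz--moment equivalence does not apply and the constant must genuinely depend on $\alpha$. The remedy is never to manipulate the Orlicz functional directly but to route everything through the tail bound $\Pr[\abs{X}\ge t]\le 2\exp(-\Psi_{\alpha,L}(t/c))$ and to carry the exponent $\theta=\min\{2,\alpha\}$ through the scaling step. Once that is in place, the remaining pieces --- the two Stirling estimates, the factor–$2$ comparison between $\sqrt{p}+Lp^{1/\alpha}$ and $\Psi_{\alpha,L}^{-1}(p)$, and the bookkeeping near the crossover $t_{*}=L^{-\alpha/(2-\alpha)}$ and near $p=1$ --- are all routine.
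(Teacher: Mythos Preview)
The paper does not prove this lemma; it is quoted as Proposition~A.4 of \cite{kuchibhotla2018moving} and used as a black box in the proof of Theorem~\ref{thm:GBO_bound}. There is therefore no in-paper argument to compare against.

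Your proof is essentially correct and follows the natural route: pass between the GBO norm, the tail bound $\Pr[\abs{X}\ge t]\le 2\exp(-\Psi_{\alpha,L}(t/\eta))$, and the moment growth $\norm{X}_p\lesssim \sqrt{p}+Lp^{1/\alpha}$, using layer-cake plus Stirling in one direction and optimized Markov in the other. Your identification of the non-convexity of $\phi_{\alpha,L}$ for $\alpha<1$ as the only real obstacle, and the use of the scaling inequality $\Psi_{\alpha,L}(x/K)\le K^{-\min\{2,\alpha\}}\Psi_{\alpha,L}(x)$ to sidestep it, are exactly the right moves; this is also how the original reference handles the issue.

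One small correction in the moments-to-norm direction: you claim that when the natural $p=\Psi_{\alpha,L}(t/(cM))$ falls below $1$ the target bound $2e^{-\Psi_{\alpha,L}(t/(cM))}$ ``already exceeds $1$'', but $2e^{-u}\ge 1$ only for $u\le\log 2$, leaving the sliver $u\in(\log 2,1)$ uncovered. The fix is painless: for $u\ge 1$ your optimized-Markov step gives $\Pr[Y\ge u]\le Ce^{-u}$, while for $u<1$ the trivial bound $\Pr[Y\ge u]\le 1\le e\cdot e^{-u}$ suffices; the extra constant $e$ is then absorbed by enlarging the factor $3^{1/\theta}$ in the final scaling step (or equivalently by replacing $Y/3$ with $Y/4$ in the layer-cake integral). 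With that tweak the argument is complete.
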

Because \cref{eq:moment_lower} implies that
\begin{equation*}
\begin{aligned}
    \sup_{p \geq 1} \frac{\norm{Z^*}_p}{\sqrt{p} + L^* p^{1/\alpha}} 
    & \geq \frac{\norm{Z^*}_n}{\sqrt{n} + L^* n^{1/\alpha}} \\
    & \geq \frac{1}{C(\alpha)} 
    \frac{\max\left\{\sqrt{n} \norm{a \odot \bar{L}}_2,
        n^{1/\alpha} \norm{a \odot L}_\beta \right\}}{\sqrt{n} + L^* n^{1/\alpha}} \\
    & \geq \frac{1}{C(\alpha)} \norm{a \odot \bar{L}}_2,
\end{aligned}
\end{equation*}
the desired lower bound $\norm{Z^*}_{\phi_{\alpha, L^*}} \geq \frac{1}{C(\alpha)} \norm{a \odot \bar{L}}_2$ is achieved.

\end{proof}

\begin{proof}[Proof of Theorem~\ref{thm:tail_bound}]

For \cref{eq:tail_prob_upper_logconcave,eq:tail_prob_lower_logconcave}, see the proof of Corollary in \cite{gluskin1995tail}. \cref{eq:tail_prob_upper} is the result of Chernoff's inequliaty on the GBO norm upperbound in \cref{eq:GBO_upper}. For \cref{eq:tail_prob_lower}, we use the Paley-Zygmund inequality \cite{paley1932note} based on the tight moment bound established in \cref{thm:moment_bound}. For $p \geq 2$,
\begin{equation*}\begin{aligned}
    \Pr\left[\abs{Z^*} \geq \frac{1}{2} \norm{Z^*}_p\right]
    & = \Pr\left[\abs{Z^*}^p \geq \frac{1}{2^p} \norm{Z^*}_p^p \right] \\
    & \geq \left(1 - \frac{1}{2^p}\right)^2 \frac{\norm{Z^*}_p^{2p}}{\norm{Z^*}_{2p}^{2p}} \\
    & \geq \frac{1}{2 C(\alpha)^p} 
        \frac{\max\{p^{p}\norm{a \odot \bar{L}}_2^{2p},
            p^{2p/\alpha}\norm{a \odot L}_p^{2p}\}}
            {\max\{(2p)^{p}\norm{a \odot \bar{L}}_2^{2p},
            (2p)^{2p/\alpha}\norm{a \odot L}_{2p}^{2p}\}} \\
    & \geq \frac{1}{2 C(\alpha)^p} 
        \max\left\{ \frac{p^{p}\norm{a \odot \bar{L}}_2^{2p}}
            {(2p)^{p}\norm{a \odot \bar{L}}_2^{2p}}, 
        \frac{p^{2p/\alpha}\norm{a \odot L}_p^{2p}}
            {(2p)^{2p/\alpha}\norm{a \odot L}_{2p}^{2p}} \right\} \\
    & \geq \frac{1}{2} \exp( - C(\alpha) ~p ).
\end{aligned}\end{equation*}
Plugging the moment lowerbound (\cref{eq:moment_lower}) in,
\begin{equation*}
    \Pr\left[\abs{Z^*} \geq \frac{1}{C(\alpha)}\max\{\sqrt{p} \norm{a \odot \bar{L}}_2,
            p^{1/\alpha}\norm{a \odot L}_\infty\} \right]
    \geq \frac{1}{2} \exp( - C(\alpha) ~p ).
\end{equation*}
With $p = \min\left\{\left(\frac{2 C(\alpha) t}{\norm{a \odot \bar{L}}_2}\right)^2, \left(\frac{2 C(\alpha) t}{\norm{a \odot L}_\infty}\right)^\alpha\right\}$, we obtain the desired lowerbound for $t \geq \frac{1}{2} \norm{Z^*}_2$. Last, for $t < \frac{1}{2}\norm{Z^*}_2$, we use the Paley-Zygmund inequality on the second moment bound:
\begin{equation*}\begin{aligned}
    \Pr\left[\abs{Z^*} \geq t\right]
    & \geq \left(1 - \frac{t^2}{\norm{Z^*}_2^2}\right)^2 \frac{\norm{Z^*}_2^4}{\norm{Z^*}_{4}^{4}} \\
    & \geq \frac{1}{C(\alpha)} \exp\left(-\frac{2t^2}{\norm{Z^*}_2^2}\right) \\
    & \geq \frac{1}{C(\alpha)} \exp\left(-\frac{1}{C(\alpha)} \min\left\{\left(\frac{2t}{\norm{a \odot \bar{L}}_2}\right)^2, \left(\frac{2t}{\norm{a \odot L}_\infty}\right)^\alpha\right\}\right). \\
\end{aligned}\end{equation*}

\end{proof}

\subsection{Proof for Application} \label{sec:pf_tail_bound_sigma_sse}

\begin{proof}[Proof of Theorem~\ref{thm:tail_bound_sigma_sse}]

Following the same argument in the proof of \cref{thm:moment_bound} (see \cref{eq:prob_bound_by_Z}), 
\begin{equation*}\begin{aligned}
    \Pr[\max\{(\hat\Sigma_{ij}^{(l)} - \Sigma_{ij}^{(l)})^2 - \mu, 0\} \geq t ] 
    & \leq \exp(- C \min\{n t, n \sqrt{t}\}), \\
\end{aligned}\end{equation*}
where $\mu = \frac{C \log 2}{n}$. We note that based on the same argument in \cref{thm:Z_by_Y}, 
\begin{equation*}\begin{aligned}
    \Pr\left[C \max\left\{\frac{Y^{(l)2}}{n}, \frac{Y^{(l)4}}{n^2} \right\} \geq t \right] 
    &  = \exp(- C \min\{n t, n \sqrt{t}\}), \\
\end{aligned}\end{equation*}
where $Y^{(l)}$ is a sub-Gaussian random variable with the same survival function as in \cref{eq:Y}, and therefore, $(\hat\Sigma_{ij}^{(l)} - \Sigma_{ij}^{(l)})^2 
\overset{d}{\leq} C \max\left\{\frac{Y^{(l)2}}{n}, \frac{Y^{(l)4}}{n^2} \right\} + \mu 
\leq C\frac{Y^{(l)2}}{n} + C\frac{Y^{(l)4}}{n^2} + \mu$. Because every term in both hand sides is almost surely positive for every $l \in [m]$,
\begin{equation*}
\begin{aligned}
    \Pr\left[ \sum_{l=1}^m (\hat\Sigma_{ij}^{(l)} - \Sigma_{ij}^{(l)})^2 \geq t + C \frac{m}{n} \right]
    & \leq \Pr\left[ C \sum_{l=1}^m \frac{Y^{(l)2}}{n} + C \sum_{l=1}^m \frac{Y^{(l)4}}{n^2}  \geq t \right] \\
    & \leq \Pr\left[ C \sum_{l=1}^m \frac{Y^{(l)2}}{n} \geq \frac{t}{2} \right] 
    + \Pr\left[ C \sum_{l=1}^m \frac{Y^{(l)4}}{n^2} \geq \frac{t}{2} \right].
\end{aligned}
\end{equation*}
Based on our observation that $\norm{Y^{(l)2}}_{\psi_1}, \norm{Y^{(l)4}}_{\psi_{1/2}} \leq C$, Theorem 3.1 of \cite{kuchibhotla2018moving} derives
\begin{equation*}
    \norm*{\sum_{l=1}^m \frac{Y^{(l)2}}{n}}_{\phi_{1,\frac{1}{\sqrt{m}}}} \leq C \frac{\sqrt{m}}{n},
    \quad \norm*{\sum_{l=1}^m \frac{Y^{(l)4}}{n^2}}_{\phi_{\frac{1}{2},\frac{1}{\sqrt{m}}}} \leq C \frac{\sqrt{m}}{n^2}.
\end{equation*}
In sum,
\begin{equation} 
\begin{aligned} 
    & \Pr\left[ \sum_{l=1}^m (\hat\Sigma_{ij}^{(l)} - \Sigma_{ij}^{(l)})^2 \geq t + C \frac{m}{n} \right]
    \leq \Pr\left[ C \sum_{l=1}^m \frac{Y^{(l)2}}{n} \geq \frac{t}{2} \right] 
    + \Pr\left[ C \sum_{l=1}^m \frac{Y^{(l)4}}{n^2} \geq \frac{t}{2} \right] \\
    & \leq 2 \exp\left(- C \min\left\{\frac{n^2t^2}{m}, nt\right\} \right) 
    + 2 \exp\left(- C \min\left\{\frac{n^4t^2}{m}, n\sqrt{t}\right\} \right) \\
    & \leq 4 \exp\left(- C \min\left\{\frac{n^2t^2}{m}, nt, n\sqrt{t} \right\} \right),
\end{aligned}
\end{equation}
and therefore, $\sum_{l=1}^m (\hat\Sigma_{ij}^{(l)} - \Sigma_{ij}^{(l)})^2 \leq C \left( \frac{m + \sqrt{m\nu} + \nu}{n} + \frac{\nu^2}{n^2} \right)$ with probability at least $1 - e^{-\nu}$. Thus, if $\frac{\log q}{n} \leq C$, for $\nu = C \log(qn)$, 
\begin{equation*}
    \sup_{i,j \in [q]} \sum_{l=1}^m (\hat\Sigma_{ij}^{(l)} - \Sigma_{ij}^{(l)})^2 
    \leq C \frac{m + \log(qn)}{n},
\end{equation*}
with probability at least $1 - \frac{1}{n}$. 

Now we prove the lower bound.  
Suppose that $\Pr[\abs{\hat\Sigma_{ij}^{(l)} - \Sigma_{ij}^{(l)}} \geq t] {\geq} \exp( - C \min\{ nt, nt^2\} )$ for some $C > 0$. Then, 
\begin{equation*}
\begin{aligned}
    \sum_{l=1}^m (\hat\Sigma_{ij}^{(l)} - \Sigma_{ij}^{(l)})^2
    & \overset{d}{\geq} C \sum_{l=1}^m \max\left\{ \frac{Y^{(l)2}}{n}, \frac{Y^{(l)4}}{n^2} \right\} \\
    & \geq C \max\left\{ \sum_{l=1}^m \frac{Y^{(l)2}}{n}, \sum_{l=1}^m \frac{Y^{(l)4}}{n^2} \right\},
\end{aligned}    
\end{equation*}
which means that
\begin{equation*}
\begin{aligned}
    \Pr\left[\sum_{l=1}^m (\hat\Sigma_{ij}^{(l)} - \Sigma_{ij}^{(l)})^2 \geq t \right]
    & \geq \Pr\left[ C \max\left\{ \sum_{l=1}^m \frac{Y^{(l)2}}{n}, \sum_{l=1}^m \frac{Y^{(l)4}}{n^2} \right\} \geq t\right] \\
    & \geq \max\left\{ \Pr\left[ C\sum_{l=1}^m \frac{Y^{(l)2}}{n} \geq t \right], \Pr\left[ C \sum_{l=1}^m \frac{Y^{(l)4}}{n^2} \geq t\right]\right\}.
\end{aligned}    
\end{equation*}
Applying \cref{thm:tail_bound} to each of $\Pr\left[ C\sum_{l=1}^m \frac{Y^{(l)2}}{n} \geq t \right]$ and $\Pr\left[ C \sum_{l=1}^m \frac{Y^{(l)4}}{n^2} \geq t\right]$, we prove \cref{eq:lower_bound_sigma_sse}. 

\end{proof}

\subsection{Proof of Lemmas} \label{sec:pf_lemmas}

\begin{proof}[Proof of Lemma~\ref{thm:Z_by_Y}]

Because $\alpha \leq 1$, for $y \in \reals$, $L_i y^{2/\alpha} \geq y  \Longleftrightarrow y \geq L_i^{-\frac{\alpha}{2-\alpha}}$. Hence,
\begin{equation*}
    \Pr[\max\{\abs{Y_i}, L_i \abs{Y_i}^{2/\alpha}\} \geq t] 
    = \Pr[t \leq \abs{Y_i} \leq L_i^{-\frac{\alpha}{2-\alpha}}]
    + \Pr[L_i \abs{Y_i}^{2/\alpha} \geq t, \abs{Y_i} > L_i^{-\frac{\alpha}{2-\alpha}}].
\end{equation*}

If $t \leq L_i^{-\frac{\alpha}{2-\alpha}}$, then
\begin{equation*}\begin{aligned}
    \Pr[L_i \abs{Y_i}^{2/\alpha} \geq t, \abs{Y_i} > L_i^{-\frac{\alpha}{2-\alpha}}]
    & = \Pr[\abs{Y_i} \geq (t/L_i)^{\alpha/2}, \abs{Y_i} > L_i^{-\frac{\alpha}{2-\alpha}}] \\
    & = \Pr[\abs{Y_i} \geq \max\{(t/L_i)^{\alpha/2}, L_i^{-\frac{\alpha}{2-\alpha}}\}] \\
    & = \Pr[\abs{Y_i} \geq L_i^{-\frac{\alpha}{2-\alpha}}], 
\end{aligned}\end{equation*}
\begin{equation*}\begin{aligned}
    \Pr[\max\{\abs{Y_i}, L_i \abs{Y_i}^{2/\alpha}\} \geq t] 
    & = \Pr[t \leq \abs{Y_i} \leq L_i^{-\frac{\alpha}{2-\alpha}}]
      + \Pr[\abs{Y_i} \geq L_i^{-\frac{\alpha}{2-\alpha}}] \\
    & = \Pr[\abs{Y_i} \geq t] = e^{-t^2}.
\end{aligned}\end{equation*}

If $t > L_i^{-\frac{\alpha}{2-\alpha}}$, then $\Pr[\abs{Y_i} \geq t, \abs{Y_i} \leq L_i^{-\frac{\alpha}{2-\alpha}}] = 0$,
\begin{equation*}\begin{aligned}
    \Pr[L_i \abs{Y_i}^{2/\alpha} \geq t, \abs{Y_i} > L_i^{-\frac{\alpha}{2-\alpha}}]
    & = \Pr[\abs{Y_i} \geq \max\{(t/L_i)^{\alpha/2}, L_i^{-\frac{\alpha}{2-\alpha}}\}] \\
    & = \Pr[\abs{Y_i} \geq (t/L_i)^{\alpha/2}], 
\end{aligned}\end{equation*}
\begin{equation*}\begin{aligned}
    \Pr[\max\{\abs{Y_i}, L_i \abs{Y_i}^{2/\alpha}\} \geq t] 
    & = \Pr[t \leq \abs{Y_i} \leq L_i^{-\frac{\alpha}{2-\alpha}}]
      + \Pr[\abs{Y_i} \geq L_i^{-\frac{\alpha}{2-\alpha}}] \\
    & = \Pr[\abs{Y_i} \geq (t/L_i)^{\alpha/2}] = e^{-(t/L_i)^\alpha}.
\end{aligned}\end{equation*}

In sum, 
\begin{equation*}\begin{aligned}
    \Pr[\max\{\abs{Y_i}, L_i \abs{Y_i}^{2/\alpha}\} \geq t] 
    & = \begin{cases}
        e^{-t^2}, & t \leq L_i^{-\frac{\alpha}{2-\alpha}} \\
        e^{-(t/L_i)^\alpha}, & t > L_i^{-\frac{\alpha}{2-\alpha}}
    \end{cases} \\
    & = \exp(-\min\{t^2, (t/L_i)^\alpha\}),
\end{aligned}\end{equation*}
which is the same as the survival function of $X_i$.

\end{proof}

\begin{proof}[Proof of Lemma~\ref{thm:gluskin}]

By Karamata's inequality (Proposition B.4.a in Chapter 16 of \cite{marshall1979inequalities}) and the definition of $\kappa_i$,
\begin{equation} \label{eq:moment_i>p_upper_bound}
\begin{aligned}
    \Exp\left[\abs*{\tsum_{i>p} a_i Z_i}^p\right]^{1/p}
    & \leq \Exp\left[\abs*{\tsum_{i>p} \kappa_i a_i \epsilon_i Y_i^2}^p\right]^{1/p} \\
    & \leq \max\{\kappa_i: i \in [n]\} \Exp\left[\abs*{\tsum_{i>p} a_i \epsilon_i Y_i^2}^p\right]^{1/p} \\
    & \leq \max\{\kappa_i: i \in [n]\} \max\{ \sqrt{p} (\tsum_{i>p} a_i^2)^{1/2}, p a_{\ceil{p}} \},
\end{aligned}
\end{equation}
where $\epsilon_i$ is a Rademacher random variable, and $\ceil{p}$ is the smallest integer larger than $p$.
If we assume that $\inf\Big\{t > 0: \tsum_{i \leq p} N_i^*(p a_i / t) \leq p \Big\} + \sqrt{p} \big(\tsum_{i > p} a_i^2\big)^{1/2} \leq 1$, then
\begin{equation*}
    \inf\Big\{t > 0: \tsum_{i \leq p} N_i^*(p a_i / t) \leq p \Big\} \leq 1,
\end{equation*}
and by recalling the definition of $N_{i}^*(t) = \sup\{st - N_{i}(s): s > 0\}$,
\begin{equation*}
    1 
    \geq \frac{1}{p} \tsum_{i \leq p} N_i^*(p a_i)
    \geq \frac{\floor{p}}{p} N_{\floor{p}}^*(p a_{\floor{p}})
    \geq \floor{p}\left( a_{\floor{p}} - \frac{N_{\floor{p}}(1)}{p} \right)
    = \floor{p}\left( a_{\floor{p}} - \frac{1}{p} \right).
\end{equation*}
Hence, $a_i \leq \frac{2}{p}$ for all $i > p$, and therefore \cref{eq:moment_i>p_upper_bound} results in
\begin{equation*}
\begin{aligned}
    \Exp\left[\abs*{\tsum_{i>p} a_i Z_i}^p\right]^{1/p} \leq 2 \max\{\kappa_i: i \in [n]\}.
\end{aligned}
\end{equation*}
By the homogeneity of the $p$-norm, 
\begin{equation*}
\begin{aligned}
    & \Exp\left[\abs*{\tsum_{i>p} a_i Z_i}^p\right]^{1/p} \\
    & \leq 2 \max\{\kappa_i: i \in [n]\} \left( \inf\Big\{t > 0: \tsum_{i \leq p} N_i^*(p a_i / t) \leq p \Big\} + \sqrt{p} \big(\tsum_{i > p} a_i^2\big)^{1/2} \right).
\end{aligned}
\end{equation*}
On the other hand, because $Z_i \sim \epsilon_i N_i^{-1}(\abs{Y_i^2})$,
\begin{equation*}
    \Exp\left[ \tsum_{i \leq p} a_i Z_i \right]
    = \Exp\left[ \tsum_{i \leq p} a_i \epsilon_i N_i^{-1}(\abs{Y_i^2}) \right].
\end{equation*}
Due to the definition of $N_i^*$, for any $t, s > 0$, $pt N_i^{-1}(s) \leq N_i^*(pt) + s$. If $\inf\Big\{t > 0: \tsum_{i \leq p} N_i^*(p a_i / t) \leq p \Big\} \leq 1$, by the contraction principle (Lemma 4.6, \cite{ledoux1991probability}),
\begin{equation*}
\begin{aligned}
    \Exp\left[ \abs*{\tsum_{i\leq p} a_i Z_i}^p \right]^{1/p}
    & \leq \Exp\left[ \abs*{\frac{1}{p} \tsum_{i\leq p} \epsilon_i(N_i^*(pa_i) + \abs{Y_i^2})}^p \right]^{1/p} \\
    & \leq \frac{1}{p} \left( \tsum_{i \leq p} N_i^*(pa_i)
    + \Exp\left[ \abs*{ \tsum_{i\leq p} \abs{Y_i^2}}^p \right]^{1/p} \right) \\
    & \leq 3.
\end{aligned}
\end{equation*}
By the homogeneity of the $p$-norm, we get $\Exp\left[ \abs*{\tsum_{i\leq p} a_i Z_i}^p \right]^{1/p} \leq 3 \inf\Big\{t > 0: \tsum_{i \leq p} N_i^*(p a_i / t) \leq p \Big\}$. This proves the right-hand side.

For the left-hand side, let $b_1, \dots, b_n \geq 0$ be such that $\tsum_{i=1}^n N_i(b_i) = p$ and
\begin{equation*}
    \tsum_{i \leq p} a_i b_i = \sup\left\{\tsum_{i \leq p} a_ib_i: \tsum_{i \leq p} N_i(b_i) \leq p\right\}.
\end{equation*}
Then,
\begin{equation*}
\begin{aligned}
    \Exp\left[\abs*{\tsum_{i=1}^n a_i Z_i}^p\right]^{1/p}
    & \geq \Exp\left[\abs*{\tsum_{i \leq p} a_i Z_i}^p\right]^{1/p} \\
    & \geq \tsum_{i \leq p} a_i b_i \cdot {\textstyle\prod}_{i \leq p} \Pr[Z_i \geq b_i]^{1/p} \\
    & = \tsum_{i \leq p} a_i b_i \cdot 2^{-\floor{p}/p} \exp\left(-\frac{1}{p}\tsum_{i \leq s} N_i(b_i)\right) \\
    & = \frac{2^{-\floor{p}/p}}{e} \sup\left\{\tsum_{i \leq p} a_ib_i: \tsum_{i \leq p} N_i(b_i) \leq p\right\}. 
\end{aligned}
\end{equation*}
Because the duality between $N_i$ and $N_i^*$ implies 
\begin{equation*}
    \sup\left\{\tsum_{i \leq p} a_ib_i: \tsum_{i \leq p} N_i(b_i) \leq p\right\}
    \geq \inf\Big\{t > 0: \tsum_{i \leq p} N_i^*(p a_i / t) \leq p \Big\},
\end{equation*}
the moment $\tsum_{i=1}^n a_i Z_i$ satisfies
\begin{equation*}
\begin{aligned}
    \Exp\left[\abs*{\tsum_{i=1}^n a_i Z_i}^p\right]^{1/p}
    \geq \frac{1}{2e} \inf\Big\{t > 0: \tsum_{i \leq p} N_i^*(p a_i / t) \leq p \Big\}. 
\end{aligned}
\end{equation*}
By Jensen's inequality,
\begin{equation*}
    \Exp\left[\abs*{\tsum_{i=1}^n a_i Z_i}^p\right]^{1/p}
    \geq \Exp\left[\abs*{\tsum_{i=1}^n a_i \epsilon_i \Exp[\abs{Z_i}]}^p\right]^{1/p}
    \geq \min\{\Exp[\abs{Z_i}]: i \in [n]\} \sqrt{p} \left(\tsum_{i > p} a_i^2 \right)^{1/2}.
\end{equation*}

\end{proof}

\begin{proof}[Proof of Lemma~\ref{thm:log_phi_bound}]

Because $\varphi_p$ is symmetric, positive in $\reals$, and monotonic in $[0,\infty)$,
\begin{equation*}\begin{aligned}
    \phi_p\left(\frac{Z_i}{\eta}\right) 
    & = \int_0^\infty \Pr\left[\varphi_p\left(\frac{Z_i}{\eta}\right) \geq x\right] dx \\
    & = 1 + \int_0^\infty \frac{1}{\eta} \varphi_p'\left(\frac{t}{\eta}\right) \Pr[\abs{Z_i} \geq t] dt \\
    & = 1 + \int_0^\infty \frac{1}{\eta} \varphi_p'\left(\frac{t}{\eta}\right) \max\{\Pr[\abs{Y_i} \geq t], \Pr[L_i \abs{Y_i}^{2/\alpha} \geq t]\} dt.
\end{aligned}\end{equation*}

For the lowerbound,
\begin{equation} \label{eq:log_phi_X_lower_by_log_phi_Y}
\begin{aligned}
    \phi_p\left(\frac{Z_i}{\eta}\right) 
    & = 1 + \int_0^\infty \frac{1}{\eta} \varphi_p'\left(\frac{t}{\eta}\right) \max\{\Pr[\abs{Y_i} \geq t], \Pr[L_i \abs{Y_i}^{2/\alpha} \geq t]\} dt \\
    & \geq 1 + \max\left\{ 
        \int_0^\infty \frac{1}{\eta} \varphi_p'\left(\frac{t}{\eta}\right) \Pr[\abs{Y_i} \geq t] dt, 
        \int_0^\infty \frac{1}{\eta} \varphi_p'\left(\frac{t}{\eta}\right) \Pr[L_i \abs{Y_i}^{2/\alpha} \geq t] dt 
    \right\} \\
    & = \max\left\{ \phi_p\left(\frac{Y_i}{\eta}\right), \phi_p\left(\frac{L_i Y_i^{2/\alpha}}{\eta}\right) \right\}.
\end{aligned}\end{equation}

For the upperbound,
\begin{equation} \label{eq:log_phi_X_upper_by_log_phi_Y}
\begin{aligned}
    \phi_p\left(\frac{Z_i}{\eta}\right) 
    & = 1 + \int_0^\infty \frac{1}{\eta} \varphi_p'\left(\frac{t}{\eta}\right) \max\{\Pr[\abs{Y_i} \geq t], \Pr[L_i \abs{Y_i}^{2/\alpha} \geq t]\} dt \\
    & \leq 1 + \int_0^\infty \frac{1}{\eta} \varphi_p'\left(\frac{t}{\eta}\right) (\Pr[\abs{Y_i} \geq t] + \Pr[L_i \abs{Y_i}^{2/\alpha} \geq t]) dt \\
    & \leq 1 + 2 \max\left\{ 
        \int_0^\infty \frac{1}{\eta} \varphi_p'\left(\frac{t}{\eta}\right) \Pr[\abs{Y_i} \geq t] dt, 
        \int_0^\infty \frac{1}{\eta} \varphi_p'\left(\frac{t}{\eta}\right) \Pr[L_i \abs{Y_i}^{2/\alpha} \geq t] dt 
    \right\} \\
    & = 2 \max\left\{ \phi_p\left(\frac{Y_i}{\eta}\right), \phi_p\left(\frac{L_i Y_i^{2/\alpha}}{\eta}\right) \right\} - 1 \\
    & \leq \exp\left( 2 \max\left\{ \log\phi_p\left(\frac{Y_i}{\eta}\right), \log\phi_p\left(\frac{L_iY_i^{2/\alpha}}{\eta}\right) \right\} \right).
\end{aligned}\end{equation}

Because $Y_i$ is a symmetric random variable with a logarithmically convex tail, we follow the example in Section 3.2 of \cite{latala1997estimation} to obtain
\begin{equation} \label{eq:log_phi_Y}
    p \min\left\{1, \frac{p}{\eta^2 e^6} \right\}
    \leq \log \phi_p \left(\frac{Y_i}{\eta}\right) 
    \leq \frac{16p^2}{\eta^2}.
\end{equation}
On the other hand, because $Y_i^{2/\alpha}$ has a logarithmically concave tail, the example in Section 3.3 of \cite{latala1997estimation} gives 
\begin{equation} \label{eq:log_phi_Y_2/alpha}
\begin{aligned}
    & p \min\left\{1, \max\left\{
        \frac{L_i^p}{e^{2p}\eta^p} \norm{Y_i^{2/\alpha}}_p^p, 
        \frac{p L_i^2}{e^4\eta^2} \norm{Y_i^{2/\alpha}}_2^2\right\} \right\} \\
    & \leq \log \phi_p \left(\frac{L_i Y_i^{2/\alpha}}{\eta}\right) 
    \leq p \max\left\{
        \frac{e^{2p} L_i^p}{\eta^p} \norm{Y_i^{2/\alpha}}_p^p, 
        \frac{e^4 p L_i^2}{\eta^2} \norm{Y_i^{2/\alpha}}_2^2\right\}.
\end{aligned}\end{equation}

The moments of $Y_i$ are given by the integration by parts, 
\begin{equation*}
\begin{aligned}
    \norm{Y_i}_p^p 
    = \int_0^\infty p t^{p-1} \exp(-t^2) dt
    = \frac{p}{2} \int_0^\infty s^{\frac{p}{2}-1} \exp(-s) ds
    = \Gamma\left(\frac{p}{2} + 1\right),
\end{aligned}
\end{equation*}
where Stirling's approximation derives
\begin{equation} \label{eq:moment_Y}
    \norm{Y_i}_p^p = \Gamma\left(\frac{p}{2}+1\right) = \sqrt{\frac{2\pi}{p}} \left(\frac{p}{2e}\right)^{\frac{p}{2}} \left(1 + O\left(\frac{2}{p}\right)\right).
\end{equation}

Plugging \cref{eq:log_phi_Y,eq:log_phi_Y_2/alpha,eq:moment_Y} to \cref{eq:log_phi_X_lower_by_log_phi_Y,eq:log_phi_X_upper_by_log_phi_Y}, we obtain
\begin{equation*}\begin{aligned}
    & p \min\left\{ 1, \max\left\{ \frac{p}{\eta^2 e^6}, 
    \frac{p L_i^2}{e^4 \eta^2} \sqrt{\frac{\alpha\pi}{2}} \left(\frac{2}{\alpha e}\right)^{2/\alpha},
    \frac{L_i^p}{e^{2p} \eta^p} \sqrt{\frac{\alpha \pi}{p}} \left(\frac{p}{\alpha e}\right)^{p/\alpha}
    \right\}\right\}\\
    & \leq \log \phi_p\left( \frac{Z_i}{\eta} \right) 
    \leq 2 p \max\left\{ \frac{16 p}{\eta^2}, 
    \frac{e^4 p L_i^2}{\eta^2} \sqrt{\frac{\alpha\pi}{2}} \left(\frac{2}{\alpha e}\right)^{2/\alpha},
    \frac{e^{2p} L_i^p}{\eta^p} \sqrt{\frac{\alpha \pi}{p}} \left(\frac{p}{\alpha e}\right)^{p/\alpha}
    \right\}.
\end{aligned}\end{equation*}

\end{proof}

\section{GBO norm of \texorpdfstring{$Z_i$}{Zi}} \label{sec:GBO_Z_i}

Suppose that a symmetric random variable $Z$'s survival function is
\begin{equation*}
    \Pr[\abs{Z} \geq t] = \exp(-\min\{ t^2, (t/L)^\alpha \}).
\end{equation*}
Then, for $\eta > 0$,
\begin{equation*}
\begin{aligned}
    \Exp\left[\phi_{\alpha,L}\left( \frac{\abs{Z}}{\eta} \right)\right] 
    & = \int_0^\infty \Pr\left[\phi_{\alpha,L}\left(\frac{\abs{Z}}{\eta}\right) \geq t \right] dt \\
    & = \int_1^\infty \Pr\left[\exp\left(
        \min\left\{\frac{\abs{Z}^2}{\eta^2}, \left(\frac{\abs{Z}}{\eta L}\right)^\alpha \right\} 
    \right) \geq t \right] dt \\
    & = \int_0^\infty \Pr\left[ \min\left\{ \frac{\abs{Z}^2}{\eta^2}, \left(\frac{\abs{Z}}{\eta L}\right)^\alpha \right\} \geq s \right] e^s ds \\
    & = \int_0^{L^{2\alpha/(\alpha-2)}} \Pr\left[ \frac{\abs{Z}^2}{\eta^2} \geq s \right] e^s ds
    + \int_{L^{2\alpha/(\alpha-2)}}^\infty \Pr\left[ \left(\frac{\abs{Z}}{\eta L}\right)^\alpha \geq s \right] e^s ds \\
    & = \int_0^{L^{2\alpha/(\alpha-2)}} \Pr\left[ \abs{Z} \geq \eta \sqrt{s} \right] e^s ds
    + \int_{L^{2\alpha/(\alpha-2)}}^\infty \Pr\left[\abs{Z} \geq \eta L s^{1/\alpha} \right] e^s ds. \\
\end{aligned}
\end{equation*}
Because $\Pr[\abs{Z} \geq t] = \max\{\exp(-t^2), \exp(-(t/L)^\alpha)\}$,
\begin{equation*}
\begin{aligned}
    \Exp\left[\phi_{\alpha,L}\left( \frac{\abs{Z}}{\eta} \right)\right] 
    & = \int_0^{L^{2\alpha/(\alpha-2)}} \Pr\left[ \abs{Z} \geq \eta \sqrt{s} \right] e^s ds
    + \int_{L^{2\alpha/(\alpha-2)}}^\infty \Pr\left[\abs{Z} \geq \eta L s^{1/\alpha} \right] e^s ds \\
    & \geq \int_0^{L^{2\alpha/(\alpha-2)}} \exp(-\eta^2 s) e^s ds
    + \int_{L^{2\alpha/(\alpha-2)}}^\infty \exp(-\eta^\alpha s) e^s ds \\
    & \geq \int_0^{\infty} \exp(-\max\{ \eta^2, \eta^\alpha \} s) e^s ds \\
    & \geq \begin{cases}
        \infty, & 0 < \eta \leq 1, \\
        \frac{1}{\max\{ \eta^2, \eta^\alpha \} -1}, & 1 < \eta.
    \end{cases}
\end{aligned}
\end{equation*}
Hence,
\begin{equation*}
    \norm{Z}_{\phi_{\alpha, L}} 
    = \inf\{ \eta > 0: \Exp[\phi_{\alpha,L}(\abs{Z}/\eta)] \leq 1 \}
    \geq \min\{\sqrt{2}, 2^{1/\alpha}\}.
\end{equation*}
On the other hand, based on \cref{thm:Z_by_Y},
\begin{equation*}
\begin{aligned}
    \Exp\left[\phi_{\alpha,L}\left( \frac{\abs{Z}}{\eta} \right)\right] 
    & \leq \Exp\left[\phi_{\alpha,L}\left( \frac{\max\{\abs{Y}, L \abs{Y}^{2/\alpha}\}}{\eta} \right)\right] \\
    & \leq \Exp\left[\phi_{\alpha,L}\left( \frac{\abs{Y}}{\eta} \right)\right] 
    + \Exp\left[\phi_{\alpha,L}\left( \frac{ L \abs{Y}^{2/\alpha} }{\eta} \right)\right],
\end{aligned}
\end{equation*}
where $Y$ is a symmetric sub-Gaussian random variable satisfying $\Pr[\abs{Y} \geq t] = e^{-t^2}$. Because $\phi_{\alpha,L}(t) = \exp(\min\{t^2, (t/L)^\alpha\})-1 = \min\{ \exp(t^2) - 1, \exp((t/L)^\alpha) - 1\} =  \min\{\psi_2(t), \psi_\alpha(t/L)\}$,
\begin{equation*}
\begin{aligned}
    \Exp\left[\phi_{\alpha,L}\left( \frac{\abs{Z}}{\eta} \right)\right] 
    & \leq \Exp\left[\phi_{\alpha,L}\left( \frac{\abs{Y}}{\eta} \right)\right] 
    + \Exp\left[\phi_{\alpha,L}\left( \frac{ L \abs{Y}^{2/\alpha} }{\eta} \right)\right] \\
    & \leq \Exp\left[\psi_{2}\left( \frac{\abs{Y}}{\eta} \right)\right] 
    + \Exp\left[\psi_{\alpha}\left( \frac{ \abs{Y}^{2/\alpha} }{\eta} \right)\right].
\end{aligned}
\end{equation*}
For any $\alpha > 0$,
\begin{equation*}
\begin{aligned}
    \Exp[\psi_\alpha(Y^{2/\alpha}/\eta)]
    & = \Exp[\exp(Y_i^2/\eta^{\alpha}) - 1] \\
    & = \int_0^\infty \Pr[\exp(Y^2/\eta^\alpha) - 1 \geq s] ds \\
    & = \int_0^\infty \Pr[Y \geq \eta^{\alpha/2}\sqrt{\log(1+s)}] ds \\
    & = \int_0^\infty \frac{2t}{\eta^\alpha} \Pr[Y \geq t] \exp(t^2/\eta^\alpha) dt \\
    & = \int_0^\infty \frac{2t}{\eta^\alpha} \exp( - (1 - 1/\eta^\alpha) t^2) dt \\
    & = \frac{1}{\eta^\alpha},
\end{aligned}
\end{equation*}
where $s = \exp(t^2/\eta^\alpha) - 1$. Therefore,
\begin{equation*}
    \Exp\left[\phi_{\alpha,L}\left( \frac{\abs{Z}}{\eta} \right)\right] 
    \leq \frac{1}{\eta^2 - 1} + \frac{1}{\eta^\alpha - 1}
    \leq \frac{2}{\min\{\eta^2, \eta^\alpha\} - 1},
\end{equation*}
and
\begin{equation*}
    \norm{Z}_{\phi_{\alpha, L}} 
    = \inf\{ \eta > 0: \Exp[\phi_{\alpha,L}(\abs{Z}/\eta)] \leq 1 \}
    \leq \max\{\sqrt{3}, 3^{1/\alpha}\}.
\end{equation*}

\end{document}